\newcommand*{\gp}[1]{\langle\;#1\;\rangle}
\newcommand*{\Y}{\mathbin{\mathsf Y}}
\newcommand*{\order}[1]{\vert #1 \vert}
\newtheorem{theorem}{Theorem} 
\newtheorem{lemma}{Lemma}
\begin{document}

\title[The $*$-Unitary Isomorphism problem ]{The Isomorphism Problem of Unitary Subgroups of Modular Group Algebras}

\author[Balogh, Bovdi] {Zsolt Balogh, Victor Bovdi}

\dedicatory{Dedicated to the memory of Professor J\'anos Kurdics}

\address{Department of  Mathematical Sciences,	UAEU,  United Arab Emirates}
\email{\{baloghzsa,vbovdi\}@gmail.com}

\thanks{The research was supported by UAEU Research Start-up Grant No. G00002968}

\subjclass[2000]{16U60, 20D15, 20C05, 16S34}
\keywords{ group ring, isomorphism problem, unitary subgroup}

\begin{abstract}

Let $V_*({F}G)$ be the normalized unitary subgroup of the  modular group algebra ${F}G$ of a finite $p$-group $G$ over a finite field ${F}$  with  the classical involution $*$.
We investigate the isomorphism problem for the group $V_*({F}G)$, i.e. we pose the question
 when  the  group algebra ${F}G$  is uniquely determined by  $V_*({F}G)$. We give affirmative answers  for classes of  finite abelian $p$-groups,  $2$-groups of maximal class   and  non-abelian $2$-groups of order at most $16$.
\end{abstract}

\maketitle
\section{Introduction and Results}

Let $V({F}G)$ be the  normalized unit group  of the  group algebra  ${F}G$ of a finite group $G$ over a  field ${F}$. In 1947 R.M.~Thrall proposed the following problem: {\it  for a given  group $G$ and a given field ${F}$, determine all groups $H$ such that ${F}H$ is isomorphic to ${F}G$ over ${F}$}. In the special case when $G$ is a $p$-group and ${{F}}$ is a field of characteristic $p$ this problem is called isomorphism problem of modular group algebras. The modular isomorphism problem has been investigated by several authors. It has long been known \cite{Deskins} that the group algebra ${F}G$ of a finite abelian $p$-group determines $G$. This result was extended to countable abelian $p$-groups by Berman \cite{Berman_I}. The most general result of the isomorphism  problem related to abelian  groups can be found in \cite{May_I}. For non-abelian group algebras, this problem was investigated in  \cite{Baginski_I, Baginski_II,  Baginski_Caranti,   Drensky, Salim_Sandling_III, Salim_Sandling_II, Salim_Sandling_I, Sandling_III, Sandling_IV} and \cite{Wursthorn_I}.  For an overview  we  recommend the survey paper \cite{Bovdi_survey}.

A modular group algebra has  a large group of units and the isomorphism problem can be generalized
in several ways. A  stronger variant of the isomorphism problem is  the isomorphism problem of normalized units \cite{Berman_I}. Let ${F}$ be a finite field of characteristic $p$, $G$ and $H$ finite $p$-groups such that $V({F}G)$ and $V({F}H)$ are isomorphic. Are $G$ and $H$  isomorphic?

Berman \cite{Berman_I}  gave a positive answer for his question for finite abelian $p$-groups.  Sandling \cite{Sandling_II} generalized Berman's  result,  proving  that if $G$ is a finite abelian $p$-group, then a subgroup of $V({F}G)$, independent as a subset of the vector space $FG$, is isomorphic to a subgroup of $G$. For finite non-abelian $p$-groups ($p>2$) with cyclic Frattini subgroup as well as for the class of  maximal $2$-groups  Berman's question  was solved    affirmatively \cite{Balogh_Bovdi_I, Balogh_Bovdi_II}.

An element $u \in V({F}G)$ is called {\it unitary} if $u^{-1}=u^*$ with respect to the classical involution of $G$ (which sends each element of $G$ into its inverse). Obviously, the set $V_*({F}G)$ of all
unitary elements of $V({F}G)$ forms a subgroup. The structure of the unitary subgroup $V_*({F}G)$ has been   investigated in \cite{Balogh_Laver, Bovdi_Szakacs_I, Bovdi_Grishkov,  Bovdi_Kovacs_I, Bovdi_Rosa_I, Creedon_Gildea_I, Creedon_Gildea_II, Gildea_I}.

The unitary group $V_*({F}G)$ of a group algebra ${F}G$ is a small subgroup in  $V({F}G)$. So it might be interesting to ask whether this smaller subgroup determines the basic group $G$. This problem is called the
{\it $*$-unitary isomorphism problem} (*-UIP) of group algebras.

Affirmative  answer  of (*-UIP) for the classes of finite abelian $p$-groups is listed here.

\begin{theorem}\label{T:1}
Let ${F}G$ be the group algebra of a finite abelian $p$-group $G$ over a  finite field ${F}$ of characteristic $p$. Let $V_*({F}G)$ be the unitary subgroup of the group of normalized units of ${F}G$ with  the classical involution $*$.
Then $V_*({F}G)\cong V_*({F}H)$ for some group  $H$ if and only if $G\cong H$.
\end{theorem}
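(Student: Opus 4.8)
\medskip
\noindent\textit{Plan of proof.}
The implication $G\cong H\Rightarrow V_*({F}G)\cong V_*({F}H)$ is clear, so only the converse needs work. Here $H$ is a finite $p$-group over the same field ${F}$, and since $g^*=g^{-1}$ gives $gg^*=1$ the group $H$ embeds in $V_*({F}H)$; hence the hypothesis $V_*({F}G)\cong V_*({F}H)$ forces $H$ to be abelian as well. As ${F}$ is fixed, it therefore suffices to show that the isomorphism type of the finite abelian $p$-group $V_*({F}G)$, together with $q=\order{{F}}$, determines the type of $G$. Two invariants are immediate: from $G\le V_*({F}G)$ one gets $\exp V_*({F}G)=\exp V({F}G)=\exp G$ (the middle equality because $(1+a)^{\exp G}=1+a^{\exp G}=1$ for $a$ in the augmentation ideal ${\mathfrak J}$ of ${F}G$), and $\order{V_*({F}G)}$ is a fixed function of $q$ and of the type of $G$. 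These are not enough, so the real task is to determine the full invariant-factor decomposition of $V_*({F}G)$ and to invert it; I would split according to the parity of $p$.

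For odd $p$ the map $*$ is an automorphism of order dividing $2$ of $V({F}G)$, and since $2$ is invertible one gets the internal direct product $V({F}G)=V_+({F}G)\times V_*({F}G)$ of its two eigencomponents $V_+({F}G)=\{u:u^*=u\}$ and $V_*({F}G)=\{u:u^*=u^{-1}\}$. The next step is to prove $V_+({F}G)\cong V_*({F}G)$ by comparing $p$-power subgroups. For every $k\ge1$ one has $V({F}G)^{p^{k-1}}=1+{\mathfrak J}_k$, where ${\mathfrak J}_k=\{a^{p^{k-1}}:a\in{\mathfrak J}\}$; using $(g-1)^{p^{k-1}}=g^{p^{k-1}}-1$ and the perfectness of ${F}$, this ${\mathfrak J}_k$ is exactly the augmentation ideal of the subalgebra ${F}[G^{p^{k-1}}]$, of dimension $\order{G^{p^{k-1}}}-1$, and it is split evenly by $*$ into symmetric and antisymmetric parts. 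This forces $\order{V_+({F}G)^{p^{k-1}}}=\order{V_*({F}G)^{p^{k-1}}}$ for all $k$, so $V_+({F}G)$ and $V_*({F}G)$ have the same sequence of $p$-power ranks and are therefore isomorphic. Consequently $V({F}G)\cong V_*({F}G)\times V_*({F}G)$, so $V_*({F}G)$ determines $V({F}G)$ up to isomorphism, and Berman's affirmative solution of the isomorphism problem for normalized units of modular group algebras of finite abelian $p$-groups \cite{Berman_I} yields $G\cong H$.

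For $p=2$ this shortcut collapses: now $*$ fixes every element of order at most $2$, the decomposition $V({F}G)=V_+({F}G)\times V_*({F}G)$ fails, and $\order{V_*({F}G)}$ depends on the type of $G$ in a more delicate way. Here the plan is to compute the invariant-factor decomposition of $V_*({F}G)$ directly. For cyclic $G=\gp{x}$ of order $2^{n}$ one works inside ${F}[t]/(t^{2^{n}})$ with $t=x-1$, determining slot by slot which elements $1+ct^{k}$, and which ${F}$-linear combinations of them, are unitary, what their orders are, and how they interact, so that the type of $V_*({F}\gp{x})$ emerges as an explicit function of $n$ and $q$. For a general abelian $2$-group one then assembles these cyclic contributions using the known structure of $V({F}G)$ together with the descriptions of unitary subgroups of modular group algebras recalled in the introduction. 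The decisive step is purely combinatorial: for each fixed $q$, one must show that the multiset of invariant factors so obtained determines the type $(2^{n_1},2^{n_2},\dots)$ of $G$. I expect this inversion, together with the bookkeeping behind the explicit $2$-group computation, to be the main obstacle, because in characteristic $2$ the multiplicities appearing in $V_*({F}G)$ are not merely halves of those of $V({F}G)$, and their dependence on $q$ and on the $n_i$ has to be controlled by a careful monotonicity and counting argument.
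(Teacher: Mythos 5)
Your reduction to the converse, the observation that $H$ must be abelian because it embeds in the abelian group $V_*({F}H)$, and your treatment of odd $p$ are all sound. For odd $p$ you take a genuinely different route from the paper: you decompose $V({F}G)=V_+({F}G)\times V_*({F}G)$ into eigencomponents of $*$, show $V_+\cong V_*$ by comparing the orders of $p$-power subgroups via $V({F}G)^{p^{k-1}}=1+\mathfrak{J}(F[G^{p^{k-1}}])$ and the even $*$-splitting of that ideal, and then invoke Berman's solution of the isomorphism problem for $V({F}G)$. The paper instead quotes the Bovdi--Szak\'acs formula $f_i(V_*({F}G))=\tfrac{m}{2}(\order{G^{p^{i-1}}}-2\order{G^{p^i}}+\order{G^{p^{i+1}}})$ and inverts the resulting triangular linear system directly (Lemma~\ref{L:2}). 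Your argument is self-contained where the paper cites a formula, at the price of invoking Berman; both are valid, and the step $\order{V_+^{p^{k-1}}}=\order{V_*^{p^{k-1}}}$ does need the small extra remark that $(V^{p^{k-1}})_+=1+(\mathfrak{J}_k)_+$ together with the product decomposition, but that is routine.

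The case $p=2$, however, is where the real content of the theorem lies, and there your proposal is a plan rather than a proof; you say yourself that the inversion "is the main obstacle," and that obstacle is exactly what remains unresolved. Two concrete gaps. First, the invariant-factor decomposition of $V_*({F}G)$ for a general abelian $2$-group is not obtained by "assembling cyclic contributions": $V_*({F}[A\times B])$ does not decompose along $V_*({F}A)$ and $V_*({F}B)$, and the paper has to import the full Bovdi--Szak\'acs description (Lemma~\ref{szakacs}), namely $V_*({F}G)=G\times M$ with $f_i(M)=t_{i-1}-2t_i+t_{i+1}-f_{i+1}(G)$ for $t_i=\tfrac{m}{2}(\order{G^{2^i}}-\order{G^{2^i}[2]})$, plus the $2$-rank and order formulas. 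Second, even granting that description, the inversion is delicate because the data mix $f_i(G)$ with the quantities $t_i$, which are not independent of the $f_j(G)$: the paper first pins down $\order{G}=\order{H}$ (Lemma~\ref{L:4}), uses $G^{2^{e-1}}\cong G^{2^{e-1}}[2]$ (Lemma~\ref{L:5}) to force $t_{e-1}=0$, hence $\exp(M)<\exp(G)$ and $f_e(V_*({F}G))=f_e(G)$, and then peels off $f_{e-1}(G), f_{e-2}(G),\dots$ by telescoping the partial sums $\sum_{i=1}^{s}f_i(V_*({F}G))$ against the rank formula of Lemma~\ref{szakacs}(i); it also needs the low-order exceptions where $V_*({F}G)=V({F}G)$ (Lemma~\ref{L:3}). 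None of this bookkeeping is present or replaced by an alternative in your proposal, so the $p=2$ half of the theorem is not established.
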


Clearly, if $G\cong H$ then $V_*({F}G)\cong V_*({F}H)$. Therefore it is necessary to prove only the "if" part of the theorem.
For non-abelian $2$-groups we prove the following results:

\begin{theorem}\label{T:2}
Let $F$ be the field of $2$ elements,	and let $G$ and $H$ be finite $2$-groups of maximal class. Then 	 $V_*({F}G)\cong V_*({F}H)$ if and only if $G \cong H$.
\end{theorem}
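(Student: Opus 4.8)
\emph{The plan} is to combine the classification of finite $2$-groups of maximal class with an explicit analysis of the unitary subgroup over $F=\mathbb{F}_{2}$, and then to separate the finitely many candidate groups by invariants of $V_*(FG)$.

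\emph{Reduction.} If $\order{G}\le 4$ then $G$ is abelian and the statement is a special case of Theorem~\ref{T:1}. If $\order{G}=2^{n}$ with $n\ge 3$, then by the classical description of $2$-groups of maximal class, $G$ is isomorphic to the dihedral group $D_{2^{n}}$, the generalized quaternion group $Q_{2^{n}}$, or --- for $n\ge 4$ --- the semidihedral group $SD_{2^{n}}$; the same applies to $H$. Since $G\cong H$ trivially forces $V_*(FG)\cong V_*(FH)$, it remains to prove that $G\mapsto V_*(FG)$ is injective on the class of maximal-class $2$-groups, i.e.\ to distinguish the three families above (and, when $n=3$, $D_8$ from $Q_8$).

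\emph{Analysis of $V_*(FG)$.} Over $F=\mathbb{F}_{2}$ a normalized unit $u=1+z$, with $z$ in the augmentation ideal $\omega$ of $FG$, is unitary precisely when $z+z^{*}=zz^{*}$. Using the cyclic maximal subgroup $C=\gp{r}$ of index $2$ in $G$ and the decomposition $FG=FC\oplus FC\cdot s$, I would solve this quadratic condition layer by layer along the filtration $\omega\supseteq\omega^{2}\supseteq\cdots$, and read off the order of $V_*(FG)$ --- and, if needed, finer invariants such as its exponent, the order of $\Omega_{1}(V_*(FG))$, or the order of its derived subgroup --- for each of the three families. These should come out pairwise distinct for each fixed $n$: already the $F$-dimension of the $*$-symmetric part of $\omega$ differs, being $2^{n-1}+2^{n-2}$, $2^{n-1}+2^{n-3}$ and $2^{n-1}$ for $D_{2^{n}}$, $SD_{2^{n}}$ and $Q_{2^{n}}$ respectively, and this disparity propagates through the computation; and a separate check shows that maximal-class groups of distinct orders cannot yield isomorphic unitary subgroups either. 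Granting this, $V_*(FG)$ recovers both $\order{G}$ and the isomorphism type of $G$, which is exactly the required injectivity.

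\emph{Main difficulty.} The hard part is the order (and, where needed, derived-subgroup) computation for the twisted families $Q_{2^{n}}$ and $SD_{2^{n}}$: in contrast with the dihedral case, the relations $s^{2}=r^{2^{n-2}}$, resp.\ $srs^{-1}=r^{2^{n-2}-1}$, make the product $zz^{*}$ genuinely $n$-dependent and introduce nonlinear terms, so the bookkeeping over the successive quotients $\omega^{i}/\omega^{i+1}$ must be organised with care --- the key is to choose a basis of $\omega$ adapted both to $C$ and to the involution and to track how $*$ acts on each Loewy layer. The small cases $D_8, Q_8$ (and the maximal-class groups of order $16$) can, if necessary, be settled by direct inspection of the corresponding group algebras.
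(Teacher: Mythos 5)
Your overall strategy --- reduce via the classification to the dihedral, quaternion and semidihedral families and then separate them by an invariant of $V_*(FG)$ computed through the decomposition $FG=FC\oplus FC\cdot b$ --- is exactly the paper's, and one of the invariants you mention in passing (the number of elements of order dividing $2$ in $V_*(FG)$) is precisely the one the paper uses. But your write-up stops at the only point where there is something to prove: you assert that the candidate invariants ``should come out pairwise distinct for each fixed $n$'' and then proceed by ``granting this.'' The heuristic you offer in support --- that the $F$-dimension of the symmetric part of $\omega$ is $2^{n-1}+2^{n-2}$, $2^{n-1}+2^{n-3}$, $2^{n-1}$ for the three families --- is an invariant of the algebra \emph{with involution}, not of the abstract group $V_*(FG)$, so it cannot by itself distinguish the unitary subgroups up to group isomorphism; an actual count inside $V_*(FG)$ has to be carried out. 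That count is the entire content of the paper's argument: writing $x=x_1+x_2b$ with $x_1,x_2\in FC$, the unitarity-and-involution conditions reduce to the systems \eqref{dihedral}, \eqref{quaternion}, \eqref{semidihedral} over $FC$, and Lemmas~\ref{L:8} and~\ref{L:9} count their solutions via the sets $H_i$ of \eqref{FccDDK} (Lemma~\ref{L:6}) and the symmetric involutions of $FC$ (Lemma~\ref{simm}), giving $\Theta_{D_{2^{n+1}}}(2)=2^{2^{n}+2}-2^{3\cdot 2^{n-2}+1}$ and $\Theta_{Q_{2^{n+1}}}(2)=2^{3\cdot 2^{n-2}+1}$; for the semidihedral group the number is not computed exactly but is squeezed strictly between the other two by comparing solution sets. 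Without some version of this computation your argument is a plan, not a proof --- and it is exactly the part you yourself flag as ``the hard part.''

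Two smaller points. First, your appeal to ``a separate check'' that maximal-class groups of distinct orders cannot have isomorphic unitary subgroups is likewise only asserted; it needs the orders (or another size invariant) of $V_*(F_2G)$ for these families. Second, the order of $V_*(FG)$ alone, which you propose to compute first, does already separate the three groups of order $16$ ($2^{10}$, $2^{11}$, $2^{12}$ for $Q_{16}$, $D^-_{16}$, $D_{16}$, as used in the proof of Theorem~\ref{T:3}), so that route is not hopeless --- but establishing those orders for general $n$ is no easier than the involution count, so the gap remains the same.
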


\begin{theorem}\label{T:3}
Let ${F}G$ be the group algebra of a finite $2$-group $G$  of order at most $16$ over  the field ${F}$  of $2$ elements. If $H$ is a  $2$-group of order at most $16$, $V_*({F}G)\cong V_*({F}H)$ if and only if $G\cong H$.
\end{theorem}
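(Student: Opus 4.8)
The plan is to establish, by a finite analysis over the $23$ isomorphism types of $2$-groups of order at most $16$ (one each of orders $1$ and $2$, two of order $4$, five of order $8$, and fourteen of order $16$), that $V_*(FG)\cong V_*(FH)$ implies $G\cong H$; the reverse implication is trivial. Two cheap facts carry most of the load. First, every $g\in G$ is unitary ($g^*=g^{-1}$), so $G$ embeds in $V_*(FG)$ and hence $V_*(FG)$ is abelian exactly when $G$ is. Second, the order $\order{V_*(FG)}$ is known (see the structure results of \cite{Bovdi_Rosa_I,Bovdi_Szakacs_I}) and, over this range, is governed by $\order{G}$ together with $t_G:=\order{\{g\in G:g^2=1\}}$; so $V_*(FG)\cong V_*(FH)$ already forces $\order{G}=\order{H}$ and $t_G=t_H$. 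First I would tabulate $t_G$ for all $23$ groups: the pair $(\order{G},t_G)$ then turns out to be a complete isomorphism invariant except inside the clusters $\{C_8,Q_8\}$ (order $8$, $t=2$), $\{C_{16},Q_{16}\}$, $\{C_4\times C_2^2,\ \mathrm{SmallGroup}(16,3),\ C_4\circ D_8\}$ (order $16$, $t=8$), and the large cluster $\{C_4\times C_4,\ C_8\times C_2,\ C_4\rtimes C_4,\ M_{16},\ Q_8\times C_2\}$ (order $16$, $t=4$); every group outside these clusters is pinned down at this stage.

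Inside the clusters I would combine Theorem~\ref{T:1} with finer invariants of the $2$-group $V_*(FG)$. Theorem~\ref{T:1} separates the abelian members of a cluster from each other, and by the first fact above they are automatically separated from the non-abelian members; this already finishes $\{C_8,Q_8\}$ and $\{C_{16},Q_{16}\}$ (consistently with Theorem~\ref{T:2}, which also covers $Q_8$ and $Q_{16}$). What remains is to separate the non-abelian groups in the last two clusters: $\mathrm{SmallGroup}(16,3)$ from $C_4\circ D_8$, and $C_4\rtimes C_4$, $M_{16}$ and $Q_8\times C_2$ pairwise. For each such $G$ I would compute invariants of $V_*(FG)$---its exponent, the orders of its centre $Z(V_*(FG))$ and of its derived subgroup $[V_*(FG),V_*(FG)]$, the number of its elements of order dividing $2$, and the isomorphism type of its abelianization---handling the smaller cases by hand from the explicit presentation of $FG$ and of $*$, and verifying the bulkier ones with the {\sc Gap} package {\sc Laguna}. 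I expect the exponent of $V_*(FG)$ to single out $M_{16}$ (whose base group already contains an element of order $8$) inside the $t=4$ cluster.

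The hard part will be the last separation, $V_*(F(Q_8\times C_2))\not\cong V_*(F(C_4\rtimes C_4))$: the two base groups agree in order, in $t_G$, in exponent, and in the orders of their centres and derived subgroups, so no cheap invariant transported through $V_*$ is obviously decisive, and one must locate a genuinely fine invariant---most plausibly $\order{[V_*(FG),V_*(FG)]}$, the isomorphism type of the abelianization, or the distribution of element orders---inside a group of order as large as $2^{15}$. This is where the bulk of the computation lives and where the argument genuinely relies on {\sc Laguna}; conceptually, the obstacle is that the modular group algebras $F(Q_8\times C_2)$ and $F(C_4\rtimes C_4)$ are very close, and one needs the unitary subgroup to detect the difference between the quaternionic and the non-quaternionic local structure of the base group.
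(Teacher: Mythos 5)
Your reduction rests on the claim that an isomorphism $V_*(FG)\cong V_*(FH)$ forces $t_G=t_H$, deduced from the assertion that $\order{V_*(FG)}$ is a function of the pair $(\order{G},t_G)$. Both halves of this are false in the very range you are working in. First, $\order{V_*(FG)}$ is not determined by $(\order{G},t_G)$: the groups $M_{16}$ and $Q_8\times C_2$ both have order $16$ and $t_G=4$, yet $\order{V_*(FM_{16})}=2^{10}$ while $\order{V_*(F(Q_8\times C_2))}=2^{11}$ (see \cite{Bovdi_Erdei_II}). Second, even where the orders agree they cannot return $t_G$: one has $V_*(FQ_8)\cong Q_8\times C_2^3$ and $V_*(FD_8)\cong D_8\times C_2^3$, both of order $2^6$, while $t_{Q_8}=2$ and $t_{D_8}=6$. (The number of involutions of $V_*(FG)$ is a legitimate invariant --- it is exactly what the proof of Theorem \ref{T:2} exploits --- but the number of involutions of $G$ is not known to be one.) Consequently the clustering by $(\order{G},t_G)$ is unjustified, and every cross-cluster pair sharing the same $\order{V_*(FG)}$ is never compared in your scheme: $Q_8$ versus $D_8$; $Q_{16}$ versus $M_{16}$ (both give $2^{10}$); $Q_8\times C_2$, $D_8\Y C_4$, $D^-_{16}$ and $C_4\ltimes C_4$ (all give $2^{11}$); $D_{16}$ versus $G(4,4)$ (both give $2^{12}$). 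These are precisely the pairs where the real work lies, so the gap is not cosmetic.

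The repair is to discard $t_G$ and work with the honestly computed invariant $\order{V_*(FG)}$, read off for each non-abelian group from the generator lists of \cite{Bovdi_Erdei_I, Bovdi_Erdei_II}, and then to separate the resulting equal-order classes by invariants you already list: abelianness (since $G\le V_*(FG)$), Hamiltonicity of $V_*(FG)$, which by \cite[Corollary 10]{Balogh_Creedon_Gildea} is equivalent to that of $G$ and so isolates $Q_8$ and $Q_8\times C_2$, and the isomorphism type of the derived subgroup $V_*(FG)'$, which distinguishes $M_{16}$ from $Q_{16}$, the three groups $C_4\ltimes C_4$, $D^-_{16}$, $D_8\Y C_4$ pairwise, and $D_{16}$ from $G(4,4)$. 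This is the route the paper takes. Note finally that the pair you flag as the hard case, $Q_8\times C_2$ versus $C_4\ltimes C_4$, is in fact among the easiest once the correct invariants are in hand: $V_*(F(Q_8\times C_2))\cong Q_8\times C_2^8$ is Hamiltonian with derived subgroup $C_2$, whereas $V_*(F(C_4\ltimes C_4))$ is not Hamiltonian and has derived subgroup $C_2^4$.
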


\section{Lemmas and preliminaries}

Let $G$ be a finite abelian $p$-group. If $char(F)=p$, then (see \cite [Chapters 2-3, p.\,194-196]{Bovdi_survey})
\[
V(FG)=\Big\{\; x=\sum_{g\in G}\alpha_gg\in FG\; \mid\;  \chi(x)=\sum_{g\in G}\alpha_g=1\; \Big\},
\]
where $\chi(x)$ is the augmentation of the element $x\in FG$.

The subgroup of a group $G$ generated by all elements of order $p^i$ is denoted by $G[p^i]$.  The exponent of $G$ is denoted  by $\exp(G)$.  Set $G^{p^i}=\gp{ g^{p^i} \,\vert\,g \in G }$.  The number of subgroups of order $p^i$ in the decomposition of a finite abelian $p$-group $G$ into a direct product of cyclic groups is denoted by $f_i(G)$.

We use the following.
\begin{lemma}(\cite[Theorem 2]{Bovdi_Szakacs_I})\label{szakacs}
Let $G$ be a finite abelian $2$-group and $|{F}|=2^m\geq 2$. The group   $V_*({F}G)$ is a direct product of cyclic $2$-groups. 	Furthermore,
	\begin{enumerate}
		\item[(i)] the $2$-rank of $V_*({F}G)$ is equal to $\frac{m}{2}(|G|+|G[2]|+|G^2[2]|-|G^2|)-m$;
		\item[(ii)] $V_*({F}G)=G \times M$ and\\
		    $f_1(M)=t_0-2t_1+t_2-f_1(G)-f_2(G)+m(|G[2]|-1)$,\\
			$f_i(M)=t_{i-1}-2t_{i}+t_{i+1}-f_{i+1}(G)$,\\
where \;			$t_i=\frac{m}{2} (|G^{2^i}|-|G^{2^i}[2]|)$\quad and $i\geq 2$;
		\item[(iii)] $|V_*({F}G|=|G^2[2]|\cdot 2^{\frac{m}{2}(|G|+|G[2]|)-m}$.
	\end{enumerate}
\end{lemma}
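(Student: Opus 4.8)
The statement that $V_*(FG)$ is a direct product of cyclic $2$‑groups is immediate: since $G$ is a finite $2$‑group and $F$ has characteristic $2$, the augmentation ideal $\Delta(FG)$ is nilpotent, so $V(FG)=1+\Delta(FG)$ is a finite $2$‑group, abelian because $G$ is, and hence so is the subgroup $V_*(FG)$. For the invariants, the plan is to use that $V_*(FG)$ is the kernel of the \emph{norm} map. Write $V=V(FG)$ and view $*$ as an automorphism of the abelian group $V$ with $*^{2}=\mathrm{id}$; put $\theta\colon u\mapsto uu^{*}$ and $\theta'\colon u\mapsto u(u^{*})^{-1}$. Commutativity of $FG$ shows at once that $\theta,\theta'$ are homomorphisms with $\theta\theta'=\theta'\theta=1$, that $\ker\theta=V_*(FG)$ and $\ker\theta'=V^{+}:=\{u\in V:u^{*}=u\}$, and that $\operatorname{im}\theta\subseteq V^{+}$, $\operatorname{im}\theta'\subseteq V_*(FG)$. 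The group $V^{+}$ is transparent: it is the set of augmentation‑$1$ elements of the $F$‑subspace $\mathfrak S$ of symmetric elements of $FG$, and $\mathfrak S$ has an $F$‑basis formed by the $g\in G$ with $g^{2}=1$ together with one element $g+g^{-1}$ for each two‑element orbit of inversion on $G$; hence $\dim_F\mathfrak S=\tfrac12(|G|+|G[2]|)$ and $|V^{+}|=2^{\frac m2(|G|+|G[2]|)-m}$.

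\emph{The orders, parts (iii) and (i).} Since $|V_*(FG)|=|V|/|\operatorname{im}\theta|$, it suffices to determine the cokernel $V^{+}/\operatorname{im}\theta$, i.e.\ the Tate cohomology $\widehat H^{0}(\langle*\rangle,V)$. As $V$ is finite the Herbrand quotient of $\langle*\rangle$ on $V$ equals $1$, so $|V^{+}/\operatorname{im}\theta|=|V_*(FG)/\operatorname{im}\theta'|$, and evaluating this common order from the explicit description of $V(FG)$ for an abelian group gives $|G^{2}[2]|\cdot|F|^{|G[2]|-1}$; substituting back yields precisely the formula of (iii). (Alternatively, count the solutions $a\in\Delta(FG)$ of $a+a^{*}+aa^{*}=0$ layer‑by‑layer along the $\Delta$‑adic filtration, using that $*$ acts trivially on $\Delta/\Delta^{2}$ because $(g-1)^{*}=g^{-1}-1\equiv g-1\pmod{\Delta^{2}}$.) For (i), the $2$‑rank $\dim_{GF(2)}V_*(FG)/V_*(FG)^{2}$ is read off from the same filtration: one finds it equal to $\log_2|V^{+}|-t_1$, where $t_1=\tfrac m2(|G^{2}|-|G^{2}[2]|)$ is the correction from the first power layer, and this rearranges to the value $\tfrac m2(|G|+|G[2]|+|G^{2}[2]|-|G^{2}|)-m$ stated in (i).

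\emph{The splitting and the $f_i$, part (ii).} Each $g\in G$ is unitary, since $gg^{*}=gg^{-1}=1$, so $G\le V_*(FG)$. By the known purity of $G$ in $V(FG)$ for a finite abelian $p$‑group — equivalently, by checking $G\cap V_*(FG)^{2^{i}}=G^{2^{i}}$ — the subgroup $G$ is a direct factor; writing $V(FG)=G\times W$, any $u=gw\in V_*(FG)$ with $g\in G$, $w\in W$ satisfies $ww^{*}=(g^{-1}u)(gu^{-1})=1$ by commutativity, so $V_*(FG)=G\times M$ with $M=W\cap V_*(FG)$, and therefore $f_i(M)=f_i(V_*(FG))-f_i(G)$. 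It then remains to compute the Ulm‑type invariants of $V_*(FG)$, which I would extract from the power filtration of $V(FG)$ — using that its $2^{i}$‑th power subgroup equals $V(FG^{2^{i}})$ for abelian $G$ — intersected with $V_*(FG)$: the quantities $t_i=\tfrac m2(|G^{2^{i}}|-|G^{2^{i}}[2]|)$ govern the successive power layers, and a second‑difference bookkeeping along the filtration, together with the socle $G[2]$ and the split factor $G$, produces exactly the values $f_i(M)$ listed in (ii).

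\emph{Main obstacle.} The orders in (i) and (iii) fall out of the counting with little pain; the real difficulty is the exact cyclic decomposition behind (ii). One has to control precisely how $*$ and the squaring map interact on each graded piece $\Delta^{j}/\Delta^{j+1}$ (equivalently, on each layer $V_*(FG)^{2^{i}}/V_*(FG)^{2^{i+1}}$) and keep the contribution of the direct factor $G$ accounted for exactly, so that the index shift $-f_{i+1}(G)$ and the constants $|G^{2}[2]|$, $m(|G[2]|-1)$ land at the right indices — getting these shifts right, rather than just the total orders, is the crux.
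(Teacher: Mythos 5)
First, a point of order: the paper does not prove this lemma at all --- it is imported verbatim as \cite[Theorem 2]{Bovdi_Szakacs_I}, so there is no in-paper argument to compare yours against; what you are really attempting is a reproof of Bovdi--Szak\'acs's external theorem. Your scaffolding is sound and well chosen: the norm maps $\theta(u)=uu^*$ and $\theta'(u)=u(u^*)^{-1}$ are homomorphisms with the stated kernels and images, the count $|V^+|=2^{\frac m2(|G|+|G[2]|)-m}$ of symmetric normalized units is correct (every augmentation-one symmetric element is a unit here, since $V(FG)=1+\Delta(FG)$), the Herbrand-quotient identity $|V^+/\operatorname{im}\theta|=|V_*/\operatorname{im}\theta'|$ is valid, and the reduction $V_*(FG)=G\times(W\cap V_*(FG))$ from a direct decomposition $V(FG)=G\times W$ is clean and correct. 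All of your formulas are mutually consistent with the statement.

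The problem is that every step carrying the actual quantitative content is asserted rather than proved. For (iii), the entire difficulty is to show $|V^+/\operatorname{im}\theta|=|G^2[2]|\cdot|F|^{|G[2]|-1}$, i.e.\ to determine exactly which symmetric units are norms $uu^*$; you write that ``evaluating this common order \dots gives'' this value, which is precisely the theorem in disguise, and the factor $|G^2[2]|$ (the only place the finer structure of $G$ enters) is never accounted for. For (i), the rank is $\log_2|V_*(FG)[2]|=\log_2|V^+[2]|$ (note $V_*[2]=V^+\cap V[2]$, which you could have used), but your claim that ``one finds it equal to $\log_2|V^+|-t_1$'' again skips the computation of the square layer that produces $t_1$. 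For (ii), you yourself concede in the final paragraph that the second-difference bookkeeping yielding the $f_i(M)$ --- the index shifts $-f_{i+1}(G)$, the constant $m(|G[2]|-1)$, and the placement of $|G^2[2]|$ --- is ``the crux'' and you do not carry it out. So what you have is a plausible and internally consistent proof plan whose three essential counts are all deferred; as a proof of the lemma it is incomplete.
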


Now we can prove the following.

\begin{lemma}\label{L:2}
Let $G$ be a finite abelian $p$-group and $|{F}|=p^m\geq p$ in which     $p$ is odd. Then
	$V_*({F}G)\cong V_*({F}H)$ for some group $H$ if and only if $G\cong H$.
\end{lemma}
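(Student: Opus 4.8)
The plan is to reduce the statement to Berman's theorem that the normalized unit group of the modular group algebra of a finite abelian $p$-group determines the group \cite{Berman_I}, by showing that for odd $p$ the unitary subgroup is ``exactly half'' of $V(FG)$, in the sense that $V(FG)\cong V_{*}(FG)\times V_{*}(FG)$. Since $G$ is abelian, $FG=F\oplus J$ is commutative ($J$ the augmentation ideal), so $V(FG)=1+J$ is a finite abelian $p$-group. As $p$ is odd we have the eigenspace decomposition $FG=FG^{+}\oplus FG^{-}$ for $*$; here $FG^{+}$ is a commutative local $F$-subalgebra with maximal ideal $J^{+}=J\cap FG^{+}$, and $\dim_{F}J^{+}=\dim_{F}FG^{-}=\tfrac{|G|-1}{2}$. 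First I would check that
\[
\phi\colon V(FG)\longrightarrow V(FG),\qquad \phi(u)=u(u^{*})^{-1},
\]
is a group homomorphism (commutativity of $FG$ is essential here), that $\operatorname{im}\phi\subseteq V_{*}(FG)$, that $\ker\phi=V^{+}(FG):=\{u\in V(FG)\mid u^{*}=u\}$, which coincides with the unit group $V(FG^{+})=1+J^{+}$, and that $\phi$ restricted to $V_{*}(FG)$ is $v\mapsto v^{2}$, an automorphism since $V_{*}(FG)$ is a $p$-group with $p$ odd. Together with the commutativity of $V(FG)$ this gives the internal direct product $V(FG)=V(FG^{+})\times V_{*}(FG)$.

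The core step is $V(FG^{+})\cong V_{*}(FG)$. For the orders this is immediate, $|V(FG^{+})|=|F|^{\dim_{F}J^{+}}=|F|^{(|G|-1)/2}=|V_{*}(FG)|$; the Cayley transform $v\mapsto(1-v)(1+v)^{-1}$, legitimate because $1+v$ has augmentation $2\ne0$, even gives an explicit bijection $V_{*}(FG)\leftrightarrow FG^{-}$, and in particular $V_{*}(FG)$ already determines $|G|$. For the full isomorphism of these two abelian $p$-groups one compares the invariants $|A^{p^{i}}|$. Since the $p$-power map carries $FG^{+}$ into $FG^{+}$ and $FG^{-}$ into $FG^{-}$ (again using $p$ odd), the decomposition $J=J^{+}\oplus J^{-}$ is preserved by every power of Frobenius, so $(1+x)^{p^{i}}=1+x^{p^{i}}$ and the product decomposition above reduce the claim to
\[
\dim_{F}\{x\in J^{+}\mid x^{p^{i}}=0\}=\dim_{F}\{x\in J^{-}\mid x^{p^{i}}=0\}\qquad(i\ge1).
\]
\emph{This equality of Frobenius-kernel dimensions on the symmetric and the skew part is the step I expect to be the main obstacle.} I would prove it by induction on the number of cyclic direct factors of $G$, using $F(G_{1}\times G_{2})^{\pm}=\bigl(FG_{1}^{+}\otimes FG_{2}^{\pm}\bigr)\oplus\bigl(FG_{1}^{-}\otimes FG_{2}^{\mp}\bigr)$ to reduce to $G$ cyclic, in which case $FG^{+}$ is the truncated polynomial algebra $F[t]/(t^{(|G|+1)/2})$ and both kernels can be read off directly; alternatively one may invoke the known structural description of $V_{*}(FG)$ for odd $p$.

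Granting $V(FG)\cong V_{*}(FG)\times V_{*}(FG)$, the lemma follows at once; only the ``if'' part needs proof. If $V_{*}(FH)\cong V_{*}(FG)$ for some group $H$, then, since $H\le V_{*}(FH)$, the group $H$ is a subgroup of a finite abelian $p$-group, hence itself a finite abelian $p$-group, and the discussion above applies to $H$; therefore
\[
V(FH)\cong V_{*}(FH)\times V_{*}(FH)\cong V_{*}(FG)\times V_{*}(FG)\cong V(FG).
\]
By Berman's isomorphism theorem \cite{Berman_I} this gives $G\cong H$, which completes the proof.
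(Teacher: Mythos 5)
Your proof is correct, but it takes a genuinely different route from the paper's. The paper does not reduce to Berman's theorem at all: it quotes the Bovdi--Szak\'acs structure theorem for $V_*(FG)$ at odd $p$, which yields $f_i(V_*(FG))=\tfrac{m}{2}\bigl(|G^{p^{i-1}}|-2|G^{p^i}|+|G^{p^{i+1}}|\bigr)$, and then solves the resulting triangular linear system to recover the orders $|G^{p^i}|$, hence the invariants $f_i(G)$, directly from the invariants of $V_*(FG)$. You instead establish $V(FG)\cong V_*(FG)\times V_*(FG)$ from scratch and invoke Berman. Your decomposition $V(FG)=V(FG^+)\times V_*(FG)$ via $\phi(u)=u(u^*)^{-1}$ is sound, and the step you flag as the main obstacle is indeed true and can be settled more cleanly than by your induction on cyclic factors: for each $i$ the map $x\mapsto x^{p^i}$ sends $FG$ onto $FG^{p^i}$ and commutes with $*$, hence maps $FG^{\pm}$ onto $(FG^{p^i})^{\pm}$, whose dimensions are $(|G^{p^i}|\pm 1)/2$; rank--nullity then gives $\dim\ker$ equal to $(|G|-|G^{p^i}|)/2$ on both $J^+$ and $J^-$, and since $(1+x)^{p^i}=1+x^{p^i}$ these are exactly the torsion invariants of $V(FG^+)$ and of $V_*(FG)$. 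What each approach buys: yours is more conceptual, makes explicit the fact that $V_*(FG)$ is ``exactly half'' of $V(FG)$ for odd $p$, and recovers $|V_*(FG)|=|F|^{(|G|-1)/2}$ in passing via the Cayley transform, at the price of importing Berman's theorem; the paper's is an elementary unwinding of invariants once the cited structure theorem for $V_*(FG)$ is granted, and runs parallel to the $p=2$ computation carried out later in the proof of Theorem~\ref{T:1}.
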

\begin{proof} If  $\exp(V_*({F}G))=p^e\geq p$, then $\exp(G)= p^e$ by  \cite[Theorem 1(ii), p.\,24]{Bovdi_Szakacs_I}(ii) which yields  that
\[
f_i(V_*({F}G)) = \textstyle \frac{m}{2}\big(|G^{p^{i-1}}|-2|G^{p^i}|+|G^{p^{i+1}}|\big).
\]
Since $V_*({F}G)\cong V_*({F}H)$, the group  $H$ is also a finite abelian $p$-group because $G \leq  V_*({F}G)$. Thus  $f_i(V_*({F}G))=f_i(V_*({F}H))$ for all $i>1$ and  $\exp(H)=p^e$.
Moreover, it is easy to check that

\[
\begin{split}
\order{G}-2\order{G^{p}}+\order{G^{p^{2}}} =& \order{H}-2\order{H^{p}}+\order{H^{p^{2}}} \\
&\vdots\\
\order{G^{p^{i-1}}}-2\order{G^{p^i}}+\order{G^{p^{i+1}}} =& \order{H^{p^{i-1}}}-2\order{H^{p^i}}+\order{H^{p^{i+1}}} \\
&\vdots\\
\order{G^{p^{e-3}}}-2\order{G^{p^{e-2}}}+\order{G^{p^{e-1}}} =& \order{H^{p^{e-3}}}-2\order{H^{p^{e-2}}}+\order{H^{p^{e-1}}} \\
\order{G^{p^{e-2}}}-2\order{G^{p^{e-1}}} =& \order{H^{p^{e-2}}}-2\order{H^{p^{e-1}}} \\
\order{G^{p^{e-1}}} =& \order{H^{p^{e-1}}}.
\end{split}
\]	
A straightforward calculation gives $\order{G^{p^i}}=\order{H^{p^i}}$, so  $f_i(G)=f_i(H)$ for all $i\geq 1$. \end{proof}

The next result is a simple consequence of  \cite[Theorem 1]{Bovdi_Unitarity}.

\begin{lemma}\label{L:3}
Let $G$ be a finite abelian $2$-group and $|{F}|=2^m\geq 2$. If  $V_*({F}G)=V({F}G)$, then one of the following conditions holds:
	\begin{enumerate}
    \item[(i)] $G$ is an elementary abelian $2$-group;
    \item[(ii)] $G$ is a cyclic group of order $4$, and $|{F}|=2$.
	\end{enumerate}	
\end{lemma}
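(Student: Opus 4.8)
The plan is to reduce the statement to a single numerical identity and then read off the two cases by an elementary estimate. Since $V_*({F}G)$ is always a subgroup of $V({F}G)$ and both are finite, the hypothesis $V_*({F}G)=V({F}G)$ is equivalent to $\order{V_*({F}G)}=\order{V({F}G)}$. For the modular group algebra of a finite $2$-group the description of $V({F}G)$ recalled above gives $\order{V({F}G)}=\order{F}^{\order{G}-1}=2^{m(\order{G}-1)}$, while Lemma~\ref{szakacs}(iii) gives $\order{V_*({F}G)}=\order{G^2[2]}\cdot 2^{\frac{m}{2}(\order{G}+\order{G[2]})-m}$. Thus the whole problem becomes the identity
\[
\order{G^2[2]}\cdot 2^{\frac{m}{2}(\order{G}+\order{G[2]})-m}=2^{m(\order{G}-1)}.
\]

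The next step is to encode this in the usual invariants of $G$. Write $G\cong C_{2^{a_1}}\times\cdots\times C_{2^{a_n}}$ with $1\le a_1\le\cdots\le a_n$, so that $n$ is the rank of $G$, $\order{G[2]}=2^{n}$, $\order{G}=2^{k}$ with $k=a_1+\cdots+a_n$, and $\order{G^2[2]}=2^{d}$ with $d=\#\{i:a_i\ge 2\}$, the number of cyclic factors of order at least $4$. (The trivial group is vacuously elementary abelian, so we may assume $G\ne 1$; then $\tfrac{m}{2}(\order{G}+\order{G[2]})-m$ is a genuine non-negative integer.) Substituting and comparing exponents of $2$, the identity collapses — after cancelling $m\,2^{k-1}$ from both sides — to
\[
d=m\,2^{\,n-1}\bigl(2^{\,k-n}-1\bigr).
\]
If $k=n$ the right-hand side is $0$, hence $d=0$, i.e. every $a_i=1$ and $G$ is elementary abelian: this is conclusion~(i).

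It remains to treat $k>n$, where $d\ge 1$. Here one uses that each of the $d$ cyclic factors of order $\ge 4$ contributes at least $2$ to $k=\sum a_i$ while the other $n-d$ factors contribute $1$, whence $k\ge n+d$; together with the obvious $n\ge d$ the relation gives $d\ge 2^{\,d-1}\bigl(2^{d}-1\bigr)$, which already fails at $d=2$ (since $2<2\cdot 3$) and therefore forces $d=1$. Then the relation reads $1=m\,2^{\,n-1}\bigl(2^{\,k-n}-1\bigr)$, so each of these three positive-integer factors equals $1$: $m=1$, $n=1$, and $k-n=1$, i.e. $k=2$. Hence $G\cong C_4$ and $\order{F}=2$, which is conclusion~(ii). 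The argument is essentially bookkeeping once Lemma~\ref{szakacs}(iii) is available; the only point needing a little care is the clean reduction to $d=m\,2^{\,n-1}(2^{\,k-n}-1)$ — one must be sure the exponent is a non-negative integer (which is why the trivial group is peeled off first) and that the powers of $2$ are combined correctly — after which no real obstacle remains. Alternatively, the whole statement can be deduced directly from \cite[Theorem~1]{Bovdi_Unitarity}, which is the route indicated in the text.
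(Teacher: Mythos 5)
Your argument is correct, but it is not the route the paper takes: the paper offers no proof of Lemma~\ref{L:3} at all, recording it only as ``a simple consequence of'' \cite[Theorem~1]{Bovdi_Unitarity}, i.e.\ of Bovdi's classification of group rings with unitary unit group. You instead make the lemma a corollary of material already in the paper, by comparing $\order{V_*({F}G)}$ from Lemma~\ref{szakacs}(iii) with $\order{V({F}G)}=\order{{F}}^{\order{G}-1}$ (which follows from the augmentation description of $V({F}G)$ quoted in Section~2). Writing $\order{G}=2^k$, $\order{G[2]}=2^n$, $\order{G^2[2]}=2^d$, your reduction to $d=m\,2^{n-1}(2^{k-n}-1)$ is arithmetically correct, and the two estimates $k\ge n+d$ and $n\ge d$ do force $d\ge 2^{d-1}(2^d-1)$, hence $d\le 1$; the case $d=0$ gives the elementary abelian case and $d=1$ forces $m=n=1$, $k=2$, i.e.\ $G\cong C_4$ over the field of two elements. (It is worth noting that equality of orders suffices because $V_*({F}G)$ is a subgroup of the finite group $V({F}G)$, and that the exponent $\frac{m}{2}(\order{G}+\order{G[2]})-m$ is an integer since $\order{G}+\order{G[2]}$ is even.) What your approach buys is self-containedness and an elementary, purely numerical proof depending only on the order formula; what the paper's citation buys is brevity and, through the cited theorem, a statement valid in greater generality. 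Both routes are sound.
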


\begin{lemma}\label{L:4}
Let $G$ be a finite abelian $2$-group and $|{F}|=2^m\geq 2$. The order of $V_*({F}G)$ determines the order of $G$.
\end{lemma}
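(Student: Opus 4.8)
The plan is to read off $|G|$ from the formula of Lemma~\ref{szakacs}(iii),
\[
|V_*(FG)|=|G^2[2]|\cdot 2^{\frac{m}{2}(|G|+|G[2]|)-m},
\]
by showing that its $2$-logarithm, viewed as a function of the numerical invariants of $G$, is confined to an interval depending only on $|G|$, and that these intervals are pairwise disjoint as $|G|$ ranges over powers of $2$. First I would set $|G|=2^{n}$, let $r$ be the $2$-rank of $G$ (so $|G[2]|=2^{r}$, where $r=\sum_i f_i(G)$) and let $f=f_1(G)$ be the number of cyclic direct factors of $G$ of order $2$. Since $G^{2}$ is the direct product of the $r-f$ cyclic factors of order at least $4$, each contributing a unique subgroup of order $2$, one gets $|G^{2}[2]|=2^{\,r-f}$. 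Substituting and taking $\log_2$, for nontrivial $G$,
\[
\log_2|V_*(FG)|=(r-f)+m\cdot 2^{\,n-1}+m\cdot 2^{\,r-1}-m ,
\]
an integer; for the trivial group the value is $0$.

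Next I would use the constraints linking $n$, $r$, $f$: one has $1\le r\le n$ and $0\le f\le r$; moreover $r=n$ forces $f=r$ (that is, $G$ is elementary abelian), and since the $r-f$ factors of order $\ge 4$ contribute at least $2(r-f)$ to $n$, one has $r-f\le n/2$. Using these, I claim that for $n\ge 2$
\[
m\cdot 2^{\,n-1}+1\ \le\ \log_2|V_*(FG)|\ \le\ m\cdot 2^{\,n}-m .
\]
For the lower bound: if $r<n$ then $r-f\ge 1$ and $m2^{\,r-1}\ge m$, while if $r=n$ then $r-f=0$ but $m2^{\,r-1}=m2^{\,n-1}\ge 2m$; either way the bound follows. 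The upper bound amounts to $(r-f)+m2^{\,r-1}\le m2^{\,n-1}$: if $r=n$ this is an equality, and if $r\le n-1$ then $(r-f)+m2^{\,r-1}\le \tfrac{n}{2}+m2^{\,n-2}\le m2^{\,n-1}$, the last inequality using $m\ge 1$ together with the elementary estimate $n\le 2^{\,n-1}$.

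Finally I would treat the small cases — $\log_2|V_*(FG)|=0$ when $|G|=1$, and $\log_2|V_*(FG)|=m$ when $G\cong C_2$ — and conclude: for each $n\ge 0$, every abelian $2$-group of order $2^{n}$ yields a value of $\log_2|V_*(FG)|$ in the interval $I_0=\{0\}$, $I_1=\{m\}$, or $I_n=[\,m2^{\,n-1}+1,\ m2^{\,n}-m\,]$ for $n\ge 2$. Because $m\ge 1$, the left endpoints $m2^{\,n-1}+1$ strictly increase with $n$, the largest element $m2^{\,n}-m$ of $I_n$ is strictly below the smallest element $m2^{\,n}+1$ of $I_{n+1}$, and both $I_0$ and $I_1$ lie below $I_2=[\,2m+1,\ 3m\,]$; hence the $I_n$ are pairwise disjoint, so $|V_*(FG)|$ determines $n=\log_2|G|$. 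The step needing the most care is the sharp upper bound above: it genuinely relies on the combinatorial constraint $r-f\le n/2$ — a rank-$r$ group of order $2^{n}$ cannot have all $r$ of its cyclic factors of order $\ge 4$ once $r>n/2$ — since the crude bounds $0\le f\le r\le n$ alone would allow the intervals $I_n$ to overlap when $m$ is small compared with $n$.
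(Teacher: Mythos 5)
Your argument is correct and follows essentially the same route as the paper: both use Lemma~\ref{szakacs}(iii) to trap $|V_*({F}G)|$ between powers of $|{F}|$ that depend only on $|G|$, and then observe that these intervals are pairwise disjoint as $|G|$ ranges over powers of $2$. The one remark worth making is that your upper bound $\log_2|V_*({F}G)|\le m(2^{\,n}-1)$, which you extract from the combinatorial constraint $r-f\le n/2$, is exactly the trivial inequality $|V_*({F}G)|\le |V({F}G)|=|{F}|^{|G|-1}$ (the route the paper takes), so that part of your verification can be replaced by one line.
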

\begin{proof}
Suppose that  $|G|=2^n$. According to Lemma \ref{szakacs} $(iii)$ we have
\begin{equation}\label{E:1}
\order{V_*({F}G)}=\order{G^2[2]}\cdot \order{{F}}^{\frac{\order{G}+\order{G[2]}}{2}-1}.
\end{equation}

Therefore, our lemma is true for $n=1,2$ by Lemma \ref{L:3}(i-ii).
Assume that $\order{G[2]}=2^s \geq 2$.
Since
\[
\order{{F}}^{\frac{\order{G}}{2}} \leq
\order{G^2[2]}\cdot \order{{F}}^{2^{s-1}-1}\cdot \order{{F}}^{\frac{\order{G}}{2}}=
\order{G^2[2]}\cdot \order{{F}}^{\frac{\order{G}+\order{G[2]}}{2}-1}=
\order{V_*({F}G)},
\]
so\quad   $\order{{F}}^{\frac{\order{G}}{2}}\leq |V_*({F}G)|$ \quad and\quad
\[
\order{{F}}^{\frac{\order{G}}{2}-1} <\order{{F}}^{\frac{\order{G}}{2}} \leq \order{V_*({F}G)} \leq \order{{F}}^{\order{G}-1} = \order{V({F}G)}.
\]
\end{proof}
The previous lemma states that the order of $G$ is determined by the isomorphism class of the unitary subgroup $V_*(FG)$. A similar statement seems to be true for non-abelian group algebras.

We freely  use the following.

\begin{lemma}\label{L:5}
If $G$ is  a finite abelian $2$-group of $\exp(G)=2^e$ in which  $e\geq 2$, then \;   $G^{2^{e-1}}\cong G^{2^{e-1}}[2]$.
\end{lemma}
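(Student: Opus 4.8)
The plan is to observe that the hypothesis $\exp(G)=2^e$ forces $G^{2^{e-1}}$ to be an elementary abelian $2$-group, after which the asserted isomorphism is immediate --- in fact one even obtains the equality $G^{2^{e-1}}=G^{2^{e-1}}[2]$. So the whole argument reduces to unwinding the two conventions $G^{2^{i}}=\gp{\,g^{2^{i}}\mid g\in G\,}$ and $G[2^{i}]=\gp{\,g\in G\mid \order{g}=2^{i}\,}$ in this particular configuration.

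First I would check that $G^{2^{e-1}}$ has exponent $2$. Indeed, each of its generators $g^{2^{e-1}}$ satisfies $\bigl(g^{2^{e-1}}\bigr)^{2}=g^{2^{e}}=1$ since $\exp(G)=2^e$; as $G$ is abelian, the elements of order dividing $2$ form a subgroup containing all of these generators, so $G^{2^{e-1}}$ is elementary abelian. It is moreover nontrivial: $G$ contains an element $x$ of order $2^e$ (here $e\geq 2$, though $e\geq 1$ already suffices), and then $x^{2^{e-1}}$ has order exactly $2$. Thus $G^{2^{e-1}}$ is a nontrivial elementary abelian $2$-group, whose non-identity elements are precisely its elements of order $2$.

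It then follows at once from the definition of $G^{2^{e-1}}[2]$ that this subgroup, being generated by all elements of order $2$ of $G^{2^{e-1}}$, equals all of $G^{2^{e-1}}$; hence $G^{2^{e-1}}=G^{2^{e-1}}[2]$, and a fortiori $G^{2^{e-1}}\cong G^{2^{e-1}}[2]$. There is essentially no obstacle here: the only point requiring care is to keep the bracket/exponent conventions straight and to record that it is exactly the exponent hypothesis that collapses $G^{2^{e-1}}$ to an elementary abelian group. Since the lemma is used later only through this identification, no sharper or more quantitative version is needed.
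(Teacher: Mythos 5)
Your proof is correct, and since the paper states Lemma \ref{L:5} without proof (it is listed among facts used ``freely''), your argument is exactly the immediate one the authors intend: $\exp(G)=2^e$ forces $(g^{2^{e-1}})^2=1$, so $G^{2^{e-1}}$ is a nontrivial elementary abelian $2$-group and therefore coincides with the subgroup generated by its order-$2$ elements.
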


\section{Proofs}

\begin{proof}[Proof of Theorem \ref{T:1}]

According to Lemma \ref{L:2},  our  theorem is true for odd  $p$.

Let $G$ be a finite abelian $2$-group and let $V_*({F}G)\cong V_*({F}H)$ for a group  $H$, where ${F}$ is a finite field of $char(F)=2$. Since $G \leq  V_*({F}G)$, so $H$ is also a finite abelian group and $\order{G}=\order{H}$ by  Lemma $\ref{L:4}$. Our theorem follows immediately from Lemma \ref{szakacs} (iii) and Lemma \ref{L:3} for groups of order $4$.
	
First, let  $\exp(V_*({F}G))=2$.
 Exponents  of $G$ and $H$ are also equal to $2$, which confirms our theorem.

If  $\exp(V_*({F}G))=4$, then   $\exp(G)=\exp(H)=4$ and\quad  $\order{G}+\order{G[2]}=\order{H}+\order{H[2]}$ \quad by Lemma \ref{L:5} and  Lemma \ref{szakacs}(i). Hence  $G^2[2]\cong H^2[2]$ by Lemma \ref{szakacs}(iii), so $G^2\cong G^2[2]\cong H^2[2]\cong H^2$ by Lemma \ref{L:5}, so  $G\cong H$.

Now let $\exp(V_*({F}G))=2^e$ with  $e>2$. Obviously (see Lemma \ref{szakacs}(ii)),   $V_*({F}G)=G \times M$,
\[
\textstyle
t_{e}=\frac{m}{2} (\order{G^{2^{e}}}-\order{G^{2^{e}}[2]})=0\quad \text{and}\quad t_{e+1}=\textstyle\frac{m}{2} \big(\order{G^{2^{e+1}}}-\order{G^{2^{e+1}}[2]}\big)=0.
\]
 Moreover, $t_{e-1}=\frac{m}{2} (\order{G^{2^{e-1}}}-\order{G^{2^{e-1}}[2]})=0$ by Lemma \ref{L:5} and so
\[
f_e(M)=t_{e-1}-2t_{e}+t_{e+1}-f_{e+1}(G)=0,
\]
Consequently,  $\exp(M)<\exp(G)$ and  $f_{e}(V_*({F}G))=f_{e}(G)$. Using  Lemma \ref{szakacs}(ii), we compute that
\[
\begin{split}
f_1(V_*({F}G))&=t_0-2t_1+t_2-f_2(G)+m(\order{G[2]}-1), \\
f_{2}(V_*({F}G))&=t_{1}-2t_{2}+t_{3}-f_3(G)+f_2(G), \\
&\vdots\\
f_i(V_*({F}G))&=t_{i-1}-2t_{i}+t_{i+1}-f_{i+1}(G)+f_i(G), \\
&\vdots\\
f_{e-2}(V_*({F}G))&=t_{e-3}-2t_{e-2}-f_{e-1}(G)+f_{e-2}(G),  \\
f_{e-1}(V_*({F}G))&=t_{e-2}-2t_{e-1}+t_{e}=t_{e-2}-f_{e}(G)+f_{e-1}(G), \\
f_{e}(V_*({F}G))&=f_{e}(G).
\end{split}
\]	
It is easy to check that $\textstyle\sum_{i=1}^{e-2} f_i(V_*({F}G))$ is equal to
\[
\begin{split}
\textstyle
t_0&-t_1+m(\order{G[2]}-1)-t_{e-2}-f_{e-1}(G)\\
&=\textstyle\frac{m}{2} (\order{G}-\order{G[2]})-\frac{m}{2} (\order{G^{2}}-\order{G^{2}[2]})\\
&\qquad\qquad
+m(\order{G[2]}-1)-t_{e-2}-f_{e-1}(G)\\
&=\textstyle\frac{m}{2} \big(\order{G}+\order{G[2]}
-\order{G^{2}}+\order{G^{2}[2]}\big)-m-t_{e-2}-f_{e-1}(G).\\
\end{split}
\]
Since $f_{i}(V_*({F}G))=f_{i}(V_*({F}H))$ for all $i\geq 1$, from
Lemma \ref{szakacs}(i)  we obtain that
\[
t_{e-2}+f_{e-1}(G)=t_{e-2}'+f_{e-1}(H),
\]
 where
$t_{e-2}'=\frac{m}{2} (\order{H^{2^{e-2}}}-\order{H^{2^{e-2}}[2]})$.
Using the  facts that $f_{e}(G)=f_{e}(H)$ and
\[
t_{e-2}=\textstyle\frac{m}{2}\Big(2^{f_{e-1}(G)}(2^{f_{e}(G)}-1)2^{f_{e}(G)}\Big),
\]
we conclude that $f_{e-1}(G)=f_{e-1}(H)$ and $t_{e-2}=t_{e-2}'$.

Similarly, for each $1\leq s \leq e-3$  we obtain  that $\sum_{i=1}^{s} f_i(V_*({F}G))$ is equal to
\[
\begin{split}
t_0-t_1&+m(\order{G[2]}-1)-t_{s}+t_{s+1}-f_{s+1}(G)\\
=&\textstyle\frac{m}{2} (\order{G}+\order{G[2]}
-\order{G^{2}}+\order{G^{2}[2]})-m-t_{s}+t_{s+1}-f_{s+1}(G).
\end{split}
\]
Thus $f_{s}(G)=f_{s}(H)$ for all  $1\leq s \leq e$,  which complete the proof. \end{proof}

\section{Group algebras of $2$-groups of maximal class}

Let $G$ be a $2$-group of maximal class. It is well-known that $G$ is one of the following groups:
the dihedral $D_{2^{n+1}}$, the generalized quaternion $Q_{2^{n+1}}$, or  the semidihedral   group $D_{2^{n+1}}^-$,  respectively.  Set
\begin{equation}\label{E:2}
\begin{split}
D_{2^{n+1}}&=\gp{\; a,b  \;\;\vert \;\;
	a^{2^n}=1, b^2=1, (a,b)=a^{-2},\; n\geq 2\;};\\
Q_{2^{n+1}}&=\gp{\; a,b \;\; \vert  \;\;
	a^{2^n}=1, b^2=a^{2^{n-1}},  (a,b)=a^{-2},\; n\geq 2\;};\\
D_{2^{n+1}}^-&=\gp{\; a,b  \;\;\vert \;\; a^{2^n}=1, b^2=1,
	(a,b)=a^{-2+2^{n-1}},\; n\geq 3\;},\\
\end{split}
\end{equation}
where $(a,b):=a^{-1}b^{-1}ab$.

In the sequel  we  fix the cyclic subgroup $C=\gp{a\;|\;a^{2^n}=1}\cong C_{2^n}$  of  $G$ from the presentation \eqref{E:2}.  If  $|F|=2$, then  each $x \in FC$ can be written as $x=x_1+x_2a$ in which  $x_1,x_2\in V({F}C^2)$ and  $C^2:=\gp{a^2}$.
Let us fix the following automorphism of ${F}C$:
\[
x\;\mapsto \widetilde{x}=\widetilde{ x_1+x_2a }=x_1+x_2a^{1+2^{n-1}} \in {F}C.
\]
\noindent
We use freely that  $x\in V_*({F}C_{})[2]$ if and only if
$x^*=x$. Moreover (see Lemma \ref{szakacs}(iii)),
\begin{equation}\label{UggF}
\order{V_*({F}C_{})}=|C_{}^2[2]|\cdot 2^{\frac{1}{2}(|C_{}|+|C_{}[2]|)}=2^{2^{n-1}+1}.
\end{equation}

\noindent
For each  $0 \leq i< 2^n$,  we define the following set
\begin{equation}\label{FccDDK}
H_i=\{\;  \; h \in V({F}C_{}) \quad \vert\quad
hh^*(1+a)^i(1+a^{-1})^i\in {F}C_{}^2 \; \;\}.
\end{equation}

\begin{lemma}\label{L:6}(\cite[Lemma 8]{Balogh_Bovdi_II})
	The set $H_i$ has the following properties:
	\begin{enumerate}
		\item[(i)] if $i\geq 2^{n-1}$, then $H_i=V({F}C_{})$;
		\item[(ii)] if $i<2^{n-1}$ and $i$ is odd, then $H_i$ is empty;
		\item[(iii)] if $l<2^{n-2}$, then $H_{2l}\leq V({F}C_{})$ and   $|H_{2l}|=2^{3\cdot 2^{n-2}+l}$.
	\end{enumerate}
\end{lemma}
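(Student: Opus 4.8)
\emph{Plan.} Throughout put $\varepsilon=1+a^{2}$ and let $\omega=\varepsilon\,FC^{2}$ be the augmentation ideal of $FC^{2}$; recall $FC^{2}\cong F[\varepsilon]/(\varepsilon^{2^{n-1}})$ is local with maximal ideal $\omega$, nilpotent of index $|C^{2}|=2^{n-1}$. The first step is to rewrite the defining relation of $H_{i}$ as a membership statement in $FC^{2}$. For $h\in FC$ write $h=h_{1}+h_{2}a$ with $h_{1},h_{2}\in FC^{2}$; a short computation gives $hh^{*}=u+a\rho$ with $u=h_{1}h_{1}^{*}+h_{2}h_{2}^{*}$ and $\rho=a^{-2}h_{1}h_{2}^{*}+h_{2}h_{1}^{*}$, both in $FC^{2}$. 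Since $(1+a)(1+a^{-1})=a+a^{-1}=a^{-1}\varepsilon$, we get $(1+a)^{i}(1+a^{-1})^{i}=a^{-i}\varepsilon^{i}$, which is $0$ for $i\ge 2^{n-1}$ because $\varepsilon^{2^{n-1}}=0$; this settles (i). For $i<2^{n-1}$, $a^{-i}\varepsilon^{i}$ is (a unit of $FC^{2}$)$\cdot\varepsilon^{i}$ if $i$ is even and $a\cdot$(unit)$\cdot\varepsilon^{i}$ if $i$ is odd. Splitting $FC=FC^{2}\oplus aFC^{2}$ and inspecting the $aFC^{2}$–component of $hh^{*}(1+a)^{i}(1+a^{-1})^{i}$ shows: for odd $i$, $h\in H_{i}\iff u\varepsilon^{i}=0\iff u\in\omega^{\,2^{n-1}-i}$; for even $i$, $h\in H_{i}\iff\rho\varepsilon^{i}=0\iff\rho\in\omega^{\,2^{n-1}-i}$.

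\emph{Part (ii).} If $h\in V(FC)$ then $\chi(h_{1})+\chi(h_{2})=1$, so exactly one of $h_{1},h_{2}$ is a unit of $FC^{2}$. As $x\mapsto x^{*}$ and multiplication by $a^{\pm2}$ both preserve the $\omega$–adic valuation, the summand of $u$ coming from the unit has valuation $0$ and therefore $u$ is itself a unit of $FC^{2}$. For odd $i<2^{n-1}$ this gives $u\notin\omega^{\,2^{n-1}-i}$, so $H_{i}=\varnothing$.

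\emph{Part (iii).} Put $m=2^{n-1}-2l$, so $H_{2l}=\{h\in V(FC):\rho\in\omega^{m}\}$. Because $FC$ is commutative, $(hk)(hk)^{*}=hh^{*}\,kk^{*}$, and since $\omega$–adic valuations add one checks that $H_{2l}$ is closed under multiplication; together with $1\in H_{2l}$ this makes $H_{2l}$ a subgroup of $V(FC)$. For the order, multiplication by $a$ sends $h_{1}+h_{2}a$ to $(a^{2}h_{2})+h_{1}a$ and preserves $H_{2l}$ (as $(ah)(ah)^{*}=hh^{*}$), hence it interchanges bijectively the part of $H_{2l}$ where $h_{1}$ is a unit with the part where $h_{2}$ is a unit; so $|H_{2l}|=2|H_{2l}'|$ with $H_{2l}'=\{h\in H_{2l}:h_{1}\text{ a unit}\}$. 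On $H_{2l}'$ write $h=h_{1}(1+ga)$, $g=h_{1}^{-1}h_{2}\in\omega$; then $\rho=h_{1}h_{1}^{*}\,\psi(g)$, where $\psi\colon FC^{2}\to FC^{2}$ is the $F$–linear map $\psi(g)=g+a^{-2}g^{*}$. Since $h_{1}h_{1}^{*}$ is a unit, $H_{2l}'$ is in bijection with $\{(h_{1},g):h_{1}\text{ a unit of }FC^{2},\ g\in\omega,\ \psi(g)\in\omega^{m}\}$, giving $|H_{2l}|=2\cdot 2^{\,2^{n-1}-1}\cdot|K|$ with $K=\{g\in\omega:\psi(g)\in\omega^{m}\}$.

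\emph{Computing $|K|$.} In the basis $\{a^{2j}\}_{0\le j<2^{n-1}}$ one has $\psi(\sum_{j}g_{j}a^{2j})=\sum_{j}(g_{j}+g_{-1-j})a^{2j}$, and $j\mapsto -1-j$ is a fixed–point–free involution of $\mathbb{Z}/2^{n-1}$, so $\ker\psi$ is the $2^{n-2}$–dimensional space of "palindromic" elements; its natural basis $\{a^{2j}+a^{-2j-2}\}$ lies in $\omega$, whence $\ker\psi\subseteq\omega$ and $\dim\psi(\omega)=(2^{n-1}-1)-2^{n-2}=2^{n-2}-1$. On the other hand $\psi(\varepsilon^{k})=\varepsilon^{k}\bigl(1+a^{-2(k+1)}\bigr)$, and for even $k$ the factor $1+a^{-2(k+1)}$ has $\omega$–adic valuation $1$ (its exponent $-(k+1)$ is odd, so $a^{-2(k+1)}$ generates $C^{2}$), hence $\psi(\varepsilon^{k})$ has valuation $k+1$; therefore $\psi(\varepsilon^{2}),\psi(\varepsilon^{4}),\dots,\psi(\varepsilon^{2^{n-1}-2})$ have the pairwise distinct valuations $3,5,\dots,2^{n-1}-1$ and form an echelon basis of $\psi(\omega)$. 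Consequently $\dim\bigl(\psi(\omega)\cap\omega^{m}\bigr)$ is the number of those valuations that are $\ge m=2^{n-1}-2l$, namely $l$, and applying rank–nullity to the map $\omega\to FC^{2}/\omega^{m}$ induced by $\psi$ yields $\dim K=(2^{n-1}-1)-\bigl((2^{n-2}-1)-l\bigr)=2^{n-2}+l$. Hence $|H_{2l}|=2\cdot 2^{\,2^{n-1}-1}\cdot 2^{\,2^{n-2}+l}=2^{\,3\cdot 2^{n-2}+l}$, as claimed. The delicate point of the whole argument is exactly this last step — pinning down $\psi(\omega)$ inside $FC^{2}$ and its $\omega$–adic filtration — and the identity $v_{\omega}\!\bigl(\psi(\varepsilon^{k})\bigr)=k+1$ for even $k$ is what makes both $\dim\psi(\omega)=2^{n-2}-1$ and the intersection $\psi(\omega)\cap\omega^{m}$ explicit.
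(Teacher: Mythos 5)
Your proof is correct, and it is worth noting that the paper itself gives no argument for this lemma at all: it is imported by citation from \cite[Lemma 8]{Balogh_Bovdi_II}, so what you have written is a self-contained proof of a quoted result rather than a variant of an argument in this paper. Your route — splitting $FC=FC^{2}\oplus aFC^{2}$, rewriting $(1+a)^{i}(1+a^{-1})^{i}=a^{-i}\varepsilon^{i}$ and reducing membership in $H_i$ to the annihilator conditions $u\varepsilon^{i}=0$ (odd $i$) resp. $\rho\varepsilon^{i}=0$ (even $i$), then parametrizing the ``$h_1$ unit'' half by pairs $(h_1,g)$ and counting $K=\{g\in\omega:\psi(g)\in\omega^{m}\}$ via the echelon basis $\psi(\varepsilon^{2}),\dots,\psi(\varepsilon^{2^{n-1}-2})$ of $\psi(\omega)$ — checks out at every step, including the key valuation identity $v_\omega(\psi(\varepsilon^{k}))=k+1$ for even $k$ and the final count $2\cdot 2^{2^{n-1}-1}\cdot 2^{2^{n-2}+l}=2^{3\cdot 2^{n-2}+l}$. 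Two small presentational points: the closure of $H_{2l}$ under multiplication is justified more transparently by the identity $\rho_{hk}=u_h\rho_k+u_k\rho_h$ (so $\rho_{hk}\in\omega^{m}$ because $\omega^{m}$ is an ideal) than by the phrase ``valuations add'', and inverses should be dispatched by finiteness (a finite multiplicatively closed subset of $V(FC)$ containing $1$ is a subgroup); also, your counting tacitly uses $|F|=2$ (units of $FC^{2}$ are exactly the augmentation-one elements, giving $2^{2^{n-1}-1}$ of them), which is consistent with the standing hypothesis of this section of the paper but should be stated.
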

The set of the $*$-symmetric elements in $ V({F}C_{})$  is denoted by $S_*({F}C_{})$.
\begin{lemma}\label{simm}
	The group \quad   $S_*({F}C_{})[2]=S_*({F}C_{})\cap  V({F}C_{})[2]$ \quad has order    $2^{2^{n-2}+1}$.
\end{lemma}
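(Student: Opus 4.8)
The plan is to recognise $S_*(FC)[2]$ as the $2$-torsion of the unitary subgroup $V_*(FC)$ and then read off its order from Lemma~\ref{szakacs}. Since $F$ has characteristic $2$ and $FC$ is commutative, $V(FC)$ is an abelian $2$-group and $S_*(FC)=\{x\in V(FC)\mid x^*=x\}$ is a subgroup of it. The one observation needed is that for $x\in V(FC)$ with $x^2=1$ one has $x^{-1}=x$, so the condition $x^*=x$ is the same as $x^*=x^{-1}$. Hence an element of $S_*(FC)\cap V(FC)[2]$ automatically satisfies $xx^*=x^2=1$, i.e. lies in $V_*(FC)[2]$, and conversely an element of $V_*(FC)[2]$ satisfies $x^*=x^{-1}=x$, so lies in $S_*(FC)\cap V(FC)[2]$. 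Thus
\[
S_*(FC)[2]=S_*(FC)\cap V(FC)[2]=V_*(FC)[2].
\]

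Next I would apply Lemma~\ref{szakacs} to the finite abelian $2$-group $C=C_{2^n}$ and the field $F$ of $2$ elements (so $m=1$). By part~(i), $V_*(FC)$ is a direct product of cyclic $2$-groups, so $|V_*(FC)[2]|=2^{r}$ where $r$ is its $2$-rank, namely
\[
r=\tfrac12\bigl(|C|+|C[2]|+|C^{2}[2]|-|C^{2}|\bigr)-1 .
\]
Substituting $|C|=2^{n}$, $|C^{2}|=2^{n-1}$ and $|C[2]|=|C^{2}[2]|=2$ (the last two because $C[2]$ is the unique subgroup of order $2$ of the cyclic group $C$, and $C^{2}=\gp{a^{2}}$ is cyclic of order $2^{n-1}\ge 2$, hence also has a unique subgroup of order $2$), one gets $r=\tfrac12\bigl(2^{n}-2^{n-1}+4\bigr)-1=2^{n-2}+1$. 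Therefore $|S_*(FC)[2]|=|V_*(FC)[2]|=2^{2^{n-2}+1}$, as claimed.

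There is no real obstacle here beyond spotting the first reduction, which trades a count of $*$-symmetric elements of order $2$ for a structural fact about $V_*(FC)$ that is already on hand. If a self-contained computation were preferred, the same value results from writing $x=\sum_i\alpha_i a^i$, translating $x^*=x$ into $\alpha_i=\alpha_{-i}$ and $x^2=1$ into $\alpha_0+\alpha_{2^{n-1}}=1$ together with $\alpha_j+\alpha_{j+2^{n-1}}=0$ for $1\le j\le 2^{n-1}-1$, and counting the free parameters of the resulting system --- equivalently, applying Burnside's lemma to the action of the Klein four-group $\gp{\,i\mapsto -i,\ i\mapsto i+2^{n-1}\,}$ on $\mathbb{Z}/2^{n}\mathbb{Z}$, which has $2^{n-2}$ orbits off $\{0,2^{n-1}\}$ and contributes the extra factor $2$ from the pair $\{\alpha_0,\alpha_{2^{n-1}}\}$ --- but the route through Lemma~\ref{szakacs} is the shortest.
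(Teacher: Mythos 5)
Your proof is correct, and it takes a genuinely different route from the paper's. The paper proves the lemma by a direct coefficient count: it writes $x=\sum_i\alpha_i a^i$, works out what the conditions $x^*=x$ and $x^2=1$ force on the $\alpha_i$, exhibits the general form of such an element, and counts the free parameters --- essentially the ``self-contained computation'' you only sketch at the end (and your orbit count under the Klein four-group $\langle i\mapsto -i,\ i\mapsto i+2^{n-1}\rangle$ is a clean way to organize exactly that calculation, including the automatic normalization $\chi(x)=1$ coming from the constraint $\alpha_0+\alpha_{2^{n-1}}=1$). Your main argument instead observes that for $x^2=1$ the conditions $x^*=x$ and $x^*=x^{-1}$ coincide, so $S_*(FC)[2]=V_*(FC)[2]$, and then reads the order off as $2^r$ where $r$ is the $2$-rank supplied by Lemma~\ref{szakacs}(i); the arithmetic $\tfrac12(2^n+2+2-2^{n-1})-1=2^{n-2}+1$ checks out. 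This is shorter and leans on machinery the paper has already imported (note the paper itself states, just before the lemma, that it uses freely the equivalence of $x\in V_*(FC)[2]$ with $x^*=x$, so your reduction is entirely in its spirit), whereas the paper's computation is self-contained and independent of the Bovdi--Szak\'acs structure theorem. Either argument suffices.
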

\begin{proof}
If 	$x=\sum_{i=0}^{2^{n}-1}\alpha_{i}a^{i} \in {F}C_{}$,  then $x^2=\sum_{i=0}^{2^{n-1}-1}(\alpha_i+\alpha_{i+2^{n-1}})a^{2i}$\quad  and
\[
x^*=\textstyle\alpha_0+\alpha_{2^{n-1}}a^{2^{n-1}}+\sum_{i=1}^{2^{n-1}-2}(\alpha_i+\alpha_{-i\pmod{2^n}})a^{i}.
\]
It follows that  each $x\in  S_*({F}C_{})[2]$ can be written in the following form
\[
\begin{split}
x=\alpha_0+\alpha_{2^{n-2}}(a^{2^{n-2}}&+a^{2^{n-2}})+\alpha_{2^{n-1}}a^{2^{n-1}}\\
&+\textstyle\sum_{i=1}^{2^{n-2}-1}\alpha_i(a^{i}+a^{2i}+a^{-2i}+a^{-i}),
\end{split}
\]
so  the number of all units in $S_*({F}C_{})[2]$ is equal to $2^{2^{n-2}+1}$. \end{proof}
Using  \eqref{E:2} and \eqref{UggF},  we  compute  the number $\Theta_{G}(2)$  of involutions in $V_*({F}G)$ for cases  $G\in\{ D_{2^{n+1}}, Q_{2^{n+1}}\}$.

\begin{lemma}\label{L:8}
The number $\Theta_{D_{2^{n+1}}}(2)$   is equal to $2^{2^{n}+2}-2^{3\cdot 2^{n-2}+1}$, where  $n\geq 2$.
\end{lemma}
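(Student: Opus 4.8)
The goal is to count involutions in $V_*(FG)$ for $G=D_{2^{n+1}}$ with $F=\mathbb F_2$. The natural approach is to decompose an arbitrary element of $FG$ along the cyclic subgroup $C=\langle a\rangle$ of index $2$: write $u = x + yb$ with $x,y\in FC$. Since $b^2=1$ and $bab^{-1}=a^{-1}$ (equivalently $ba = a^{-1}b$), one computes $u^* = x^* + b^{-1}y^*b\cdot b^{-1}$... more carefully, $u^* = x^* + (yb)^* = x^* + b^*y^* = x^* + by^*$, and I must re-express $by^*$ in the form (something in $FC$) times $b$ using $by^* = \widetilde{y^*}\,b$ where tilde-ish conjugation by $b$ sends $a\mapsto a^{-1}$. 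So the unitarity/involution condition $u^*=u$ together with $\chi(u)=1$ (normalized) and $u^2=1$ translates into a coupled system of equations on $x,y\in FC$.

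**The key reduction.**

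The condition $u^2 = 1$ with $u = x+yb$ gives, in $FC$, the pair of equations coming from the $FC$-part and the $FCb$-part. In characteristic $2$, using $bc = \bar c\, b$ for $c\in FC$ (where $\bar{}$ denotes the automorphism of $FC$ induced by $a\mapsto a^{-1}$), one gets $u^2 = x^2 + y\bar y\,b^2 + (xy + y\bar x)b = (x^2 + y\bar y) + (xy+y\bar x)b$. So $u^2=1$ is equivalent to $x^2 + y\bar y = 1$ and $xy + y\bar x = 0$, i.e. $y(x+\bar x)=0$ — wait, $xy+y\bar x$ is not $y(x+\bar x)$ unless things commute; rather it is the condition that $xy = y\bar x$. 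Meanwhile $u^* = u$ gives $x^* = x$ and the condition relating $y$ and $\bar y$ (from $by^* = \widetilde{y^*}b$). The plan is to solve these step by step: first parametrize the solutions with $y=0$ — these are exactly the $*$-symmetric involutions in $V(FC)$, counted by Lemma~\ref{simm} as $2^{2^{n-2}+1}$; then handle $y\neq 0$, where the equation $x^2 + y\bar y = 1$ forces, via the Jennings/dimension-type structure of the nilpotent ideal, a relation that ties $y$ to membership in one of the sets $H_i$ of Lemma~\ref{L:6}. Concretely $y\bar y = yy^*$ up to the tilde-twist, and $x^2+1=(x+1)^2$ is a square, so $y\bar y$ must be a square in $FC$; the filtration by powers of $(1+a)$ controls when $yy^*(1+a)^i(1+a^{-1})^i$ lands in $FC^2$, which is precisely the defining condition of $H_i$.

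**Assembling the count.**

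Once the solution set is stratified, the count of involutions is a sum: the contribution $2^{2^{n-2}+1}$ from $y=0$, plus contributions indexed by the "level" of $y$, each of which is a product of the number of admissible $y$ at that level (governed by $|H_{2l}| = 2^{3\cdot 2^{n-2}+l}$ from Lemma~\ref{L:6}(iii), and $H_i = V(FC)$ of size $2^{2^n-1}$ once $i\ge 2^{n-1}$) times the number of $x$ completing $y$ to a solution (a coset of the solution space of $(x+1)^2 = y\bar y$, whose size is the number of square roots, i.e. $|V(FC)[2]| = 2^{2^{n-1}}$ when a root exists). Summing the resulting geometric-type series in $l$ from $0$ to $2^{n-2}-1$, together with the saturated range $i\ge 2^{n-1}$, and adding the $y=0$ term, should collapse to $2^{2^n+2} - 2^{3\cdot 2^{n-2}+1}$. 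I expect the bookkeeping of overlapping ranges and the precise normalization constraint $\chi(u)=1$ to be the main obstacle — in particular, making sure each $x$ (resp. $y$) is counted with the correct augmentation constraint and that the "number of square roots" factor is applied consistently across strata — rather than any conceptual difficulty; the structural input is entirely supplied by Lemmas~\ref{L:6} and~\ref{simm} and equation~\eqref{UggF}.
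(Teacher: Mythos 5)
Your plan follows the same route as the paper: decompose $u=x+yb$ along the cyclic subgroup $C=\gp{a}$, translate $u^2=1$ and $u^*=u$ into a system over ${F}C$, stratify the off-diagonal component $y$ by its level in the $(1+a)$-filtration, and invoke Lemma~\ref{L:6} and Lemma~\ref{simm}. However, as written the counting would come out wrong, for two concrete reasons.

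First, in the stratified count you take the number of admissible $x$ for a fixed $y\neq 0$ to be the number of square roots of $yy^*+1$ in $V({F}C)$, i.e.\ $\order{V({F}C)[2]}=2^{2^{n-1}}$. But the unitarity condition forces $x=x^*$, so the admissible $x$ form a coset of $S_*({F}C)[2]$, not of $V({F}C)[2]$; the correct factor is $\order{S_*({F}C)[2]}=2^{2^{n-2}+1}$, which is strictly smaller for $n\geq 3$. (You use the correct symmetric count for the $y=0$ stratum, so this is an internal inconsistency that would inflate every $y\neq 0$ stratum by a factor of $2^{2^{n-2}-1}$.) Second, the number of admissible $y$ at level $i$ is not $\order{H_i}$: the parametrization $\gamma\mapsto\gamma(1+a)^i$ of elements of augmentation $0$ and level $i$ is many-to-one, with fibres the cosets of the stabilizer $A_i=1+\mathrm{Ann}_{FC}((1+a)^i)$ of order $2^i$ (by Hill's computation of annihilators of radical powers). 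The paper therefore counts $\order{H_i}/\order{A_i}=2^{3\cdot 2^{n-2}-k}$ distinct $y$ at level $i=2k$, and it is only after this division that the geometric series collapses to $2^{2^n+2}-2^{3\cdot 2^{n-2}+1}$. Both corrections are forced, not optional bookkeeping; with them your outline becomes the paper's proof.
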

\begin{proof} 	
Let $x=x_1+x_2b\in V({F}D_{2^{n+1}})$ in which  $x_1,x_2 \in {F}C_{}$.
It is easy to check that  $x\in V_*({F}D_{2^{n+1}})[2]$ if and only if $(x_1+x_2b)^2=1$ and $x_1+x_2b=(x_1+x_2b)^*=x_1^*+x_2b$, if and only if
\begin{equation}\label{dihedral}
\begin{cases}
x_1^2=x_2x_2^*+1;\\
x_1=x_1^*.
\end{cases}
\end{equation}
We consider the following two cases related to values of $\chi(x_1)$  and  $\chi(x_2)$.

\noindent
Case 1. Let $\chi(x_1)=1$  and  $\chi(x_2)=0$. Obviously,  $x_2\in\{0, \gamma(1+a)^i\}$ for some $\gamma\in V(FC)$ and $0<i<2^{n}$, because  $x_2\not\in V(FC)$.

For $x_2=0$, the number of different  $x_1$ satisfying the system of equations  \eqref{dihedral} coincides with  $|S_*({F}C_{})[2]|$.

If  $x_2=\gamma(1+a)^i$, then we consider the following cases:

\noindent
\underline{Case 1.1.} Let $0<i<2^{n-1}$.  The number of such different $x_2$ is  	 $\order{H_{i}}/\order{A_i}$ by \eqref{FccDDK}, in which
\[
A_i=\{u\in V({F}C_{})\,|\, u(1+a)^i=(1+a)^i \}.
\]
Since $A_i=1+Ann_{FC}((1+a)^i)$, where $Ann_{FC}((1+a)^i)$ is the annihilator of $(1+a)^i$, we have   $\order{A_i}=2^i$ by \cite{Hill_I}.

If   $i$ is odd, then $H_{i}$ is empty by  Lemma \ref{L:6}(ii), so there is no  such $x$. Furthermore, for $x_2=\gamma(1+a)^i$ in which $i=2k$,   the number of different $x_2$ (see   Lemma \ref{L:6}(iii))  is equal to
\[
\textstyle\frac{\order{H_{2k}}}{\order{A_{2k}}}=\frac{2^{3\cdot 2^{n-2}+k}}{2^{2k}}=2^{3\cdot 2^{n-2}-k}.
\]
Now we want to deal with different number of $x_1$ with a fixed $x_2$. We have showed (see above) that $i$ is even.   If $x_1'+x_2b\in V_*({F}G)[2]$, such that  $\chi(x_1')=1$, 	then $(x_1^{-1}x_1')^2=x_1^{-2}(x_1')^2=1$ and $x_1'=(x_1')^*$, so $x_1'\in x_1 S_*({F}C_{})[2]$.	
Thus, if   either $0< i=2k<2^{n-1}$ or $x_2=0$, then     the number of different units satisfying \eqref{dihedral} is
\begin{equation}\label{LLnfy}
\begin{split}
\bigg( 1+\textstyle\sum_{k=1}^{2^{n-2}-1} \frac{\order{H_{2k}}}{\order{A_{2k}}} \bigg)\cdot \order{S_*({F}C_{})[2]}
=& 2^{2^{n-2}+1} + (2^{3\cdot 2^{n-2}}) (2^{2^{n-2}}+2^{2^{n-2}-1}+\cdots +2^2)\\
=& 2^{2^{n-2}+1} + 2^2(2^{3\cdot 2^{n-2}})(2^{2^{n-2}-1}-1).
\end{split}
\end{equation}
\noindent
\underline{Case 1.2}. Let  $2^{n-1}\leq i< 2^n$. Clearly,  $H_i=V({F}C_{})$ by Lemma \ref{L:6}(i) and  the number of different  $x=x_1+x_2b\in V_*({F}G)[2]$  is
\begin{equation}\label{LLCeeh}
\begin{split}
\textstyle\sum_{i=2^{n-1}}^{2^n-1}  \frac{\order{H_{i}}}{\order{A_{i}}} \cdot \order{S_*({F}C_{})[2]}=&\textstyle\sum_{i=2^{n-1}}^{2^n-1} \frac{2^{2^n-1}}{2^i} \cdot (2^{2^{n-2}+1})\\
=
&\textstyle\sum_{i=0}^{2^{n-1}-1} 2^i \cdot (2^{2^{n-2}+1}) = (2^{2^{n-1}}-1) \cdot (2^{2^{n-2}+1}).
\end{split}
\end{equation}
Finally, the sum of  \eqref{LLnfy}  and \eqref{LLCeeh} gives  that
there is exactly $2^{3\cdot 2^{n-2}}\cdot (2^{2^{n-2}+1}-2)$ units of such form.

\noindent
Case 2. Let  $\chi(x_1)=0$ and  $\chi(x_2)=1$. Clearly,  $x_2$ is a unit and  $x_2x_2^*=(1+x_1)^2$ by  \eqref{dihedral}, where $1+x_1$ is a $*$-symmetric unit  and $x_2x_2^*\in V({F}C_{}^2)$. For a fixed unit $x_2$, the set
\[
\{1+x_1 \in S_*({F}C_{})\,\vert\, (1+x_1)^2=x_2x_2^* \}
\]
is a right coset of $S_*({F}C_{})[2]$ in $S_*({F}C_{})$.	Therefore,  the number of different $x_1$ satisfying $(1+x_1)^2=x_2x_2^*$ for a fixed $x_2$ is $|S_*({F}C_{})[2]|$.

Keeping mind that    $x_2\in H_0$,     the number of units in $V_*({F}D_{2^{n+1}})[2]$ is equal to
\[
\order{H_0} \cdot \order{S_*({F}C_{})[2]}=2^{3\cdot 2^{n-2}+2^{n-2}+1}=2^{2^{n}+1}.
\]

Finally, adding the  result  of both cases (1 and 2),  the number of elements in  $V_*({F}D_{2^{n+1}})[2]$ is equal to
\[
\Theta_{D_{2^{n+1}}}(2)=2^{3\cdot 2^{n-2}}\cdot (2^{2^{n-2}+1}-2)+2^{2^{n}+1}=2^{2^{n}+2}-2^{3\cdot 2^{n-2}+1}.
\]

\end{proof}

\begin{lemma}\label{L:9}
	The number $\Theta_{Q_{2^{n+1}}}(2)$  is equal to $2^{3\cdot 2^{n-2}+1}$, where  $n\geq 2$.
\end{lemma}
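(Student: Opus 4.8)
The plan is to mirror the argument of Lemma~\ref{L:8}, writing a general element of $V({F}Q_{2^{n+1}})$ as $x=x_1+x_2b$ with $x_1,x_2\in{F}C_{}$ and translating the two defining conditions $x^2=1$, $x^*=x$ into a system of equations in ${F}C_{}$. The structural inputs are the relations in \eqref{E:2}: in $Q_{2^{n+1}}$ one has $b^2=a^{2^{n-1}}$, the element $a^{2^{n-1}}$ is central of order $2$, and $by=y^*b$ for every $y\in{F}C_{}$ (conjugation by $b$ inverts $a$). Hence $b^*=b^{-1}=a^{2^{n-1}}b$, and a short computation gives
\[
x^*=x_1^*+a^{2^{n-1}}x_2b,\qquad x^2=\bigl(x_1^2+a^{2^{n-1}}x_2x_2^*\bigr)+\bigl(x_1x_2+x_2x_1^*\bigr)b .
\]
First I would record that $x\in V_*({F}Q_{2^{n+1}})[2]$ is therefore equivalent to the system
\[
x_1=x_1^*,\qquad (1+a^{2^{n-1}})x_2=0,\qquad x_1^2=1+a^{2^{n-1}}x_2x_2^{*},
\]
the $b$-component equation $x_1x_2+x_2x_1^*=0$ being automatic from $x_1=x_1^*$ and commutativity of ${F}C_{}$.

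The next step, which is the point that makes the quaternion count far cleaner than the dihedral one, is to observe that the middle condition forces $x_2x_2^*=0$. Indeed, in characteristic $2$ we have $1+a^{2^{n-1}}=(1+a)^{2^{n-1}}$, so $(1+a^{2^{n-1}})x_2=0$ says exactly that $x_2$ lies in $\mathrm{Ann}_{{F}C_{}}\bigl((1+a)^{2^{n-1}}\bigr)=(1+a)^{2^{n-1}}{F}C_{}$. Writing $x_2=(1+a)^{2^{n-1}}w$ and using $(1+a)(1+a^{-1})=a^{-1}(1+a)^2$ we get
\[
x_2x_2^*=(1+a)^{2^{n-1}}(1+a^{-1})^{2^{n-1}}ww^*=a^{-2^{n-1}}(1+a)^{2^{n}}ww^*=0,
\]
since $(1+a)^{2^n}=1+a^{2^n}=0$. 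Thus the third equation reduces to $x_1^2=1$; moreover $\chi(x_2)=0$ forces $\chi(x_1)=1$, so $x_1$ is a unit. Consequently the pair $(x_1,x_2)$ satisfies the system if and only if $x_1\in S_*({F}C_{})[2]$ and $x_2\in(1+a)^{2^{n-1}}{F}C_{}$, with no coupling between the two coordinates.

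Finally I would count. The admissible values of $x_2$ form the ideal $(1+a)^{2^{n-1}}{F}C_{}$, which as an ${F}$-space is spanned by $(1+a)^{j}$ for $2^{n-1}\le j<2^{n}$ and hence has order $2^{2^{n-1}}$; the admissible values of $x_1$ number $\order{S_*({F}C_{})[2]}=2^{2^{n-2}+1}$ by Lemma~\ref{simm}. Multiplying,
\[
\Theta_{Q_{2^{n+1}}}(2)=2^{2^{n-1}}\cdot 2^{2^{n-2}+1}=2^{\,2^{n-1}+2^{n-2}+1}=2^{\,3\cdot 2^{n-2}+1}.
\]
The only places that demand care are the bookkeeping with the central twist $a^{2^{n-1}}$ when commuting $b$ past ${F}C_{}$, and the identification of the annihilator ideal together with the vanishing $x_2x_2^*=0$; once these are in place, the case analysis over augmentations and the sets $H_i$ needed for $D_{2^{n+1}}$ is entirely avoided.
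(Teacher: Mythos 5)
Your proof is correct. You use the same decomposition $x=x_1+x_2b$ and arrive at a system equivalent to the paper's (its conditions $x_1=x_1^*$, $x_2=x_2a^{2^{n-1}}$, $x_1^2=x_2x_2^*+1$ become yours once $a^{2^{n-1}}x_2=x_2$ is substituted back into the squaring identity), and like the paper you dispose of the case where $x_2$ is a unit via $x_2(1+a^{2^{n-1}})=0$. Where you genuinely diverge is the counting step: the paper keeps the dihedral parametrization $x_2\in\{0,\gamma(1+a)^i\}$ and reuses Lemma~\ref{L:6} together with the solution sets from Lemma~\ref{L:8} to evaluate $\bigl(1+\sum_{i=2^{n-1}}^{2^n-1}\order{H_i}/\order{A_i}\bigr)=2^{2^{n-1}}$, whereas you identify the admissible $x_2$ outright as the annihilator ideal $(1+a)^{2^{n-1}}{F}C_{}$ of order $2^{2^{n-1}}$ and, crucially, observe that $x_2x_2^*=0$ on this ideal, so the remaining equation collapses to $x_1^2=1$ and the two coordinates decouple into a clean product $2^{2^{n-1}}\cdot\order{S_*({F}C_{})[2]}$. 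Your route avoids the $H_i$ machinery entirely and makes it transparent why the quaternion count is so much simpler than the dihedral one; the paper's route buys uniformity with Lemma~\ref{L:8} and with the semidihedral comparison in the proof of Theorem~\ref{T:2}, where the coincidence of the quaternion and dihedral solution sets for $i\ge 2^{n-1}$ is reused. Both yield $2^{3\cdot 2^{n-2}+1}$, and all the auxiliary facts you invoke (Hill's description of the annihilator, Lemma~\ref{simm}) are already part of the paper's toolkit.
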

\begin{proof}
Let $x=x_1+x_2b\in V({F}Q_{2^{n+1}})$, where $x_1,x_2 \in {F}C_{}$. It is easy to check that   $x\in V_*({F}Q_{2^{n+1}})[2]$ if and only if $(x_1+x_2b)^2=1$ and $x_1+x_2b=(x_1+x_2b)^*=x_1^*+x_2a^{2^{n-1}}b$, if and only if
\begin{equation}\label{quaternion}
\begin{cases}
x_1^2=x_2x_2^*+1;\\
x_1=x_1^*;\\
x_2=x_2a^{2^{n-1}}.
\end{cases}
\end{equation}
\noindent
Case 1. Let   $\chi(x_1)=0$ and  $\chi(x_2)=1$.	According to  (\ref{quaternion}) we have $x_2(1+a^{2^{n-1}})=0$. Since $x_2$ is a unit,  $1+a^{2^{n-1}}=0$ which is impossible.

\noindent
Case 2. Let $\chi(x_1)=1$ and  $\chi(x_2)=0$.	 Since $x_2$ is not a unit, $x_2\in\{0, \gamma(1+a)^i\}$ for some $\gamma\in V(FC)$ and  $i>0$.
According to \eqref{quaternion},  we have $x_2(1+a^{2^{n-1}})=0$ which holds  if and only if either $x_2=0$ or $2^{n-1}\leq i$. 	If $2^{n-1}\leq i$, then   \eqref{dihedral} and \eqref{quaternion} give us   the same set of solutions. Consequently,  the number $\Theta_{Q_{2^{n+1}}}(2)$  is equal to	
\[
\begin{split}
\Biggl( 1+\textstyle\sum_{i=2^{n-1}}^{2^n-1} \frac{\order{H_{i}}}{\order{A_{i}}} \Biggr) \cdot \order{S_*({F}C_{})[2]} = (2^{2^{n-1}}) \cdot (2^{2^{n-2}+1})=2^{3\cdot 2^{n-2}+1}.
\end{split}
\]
\end{proof}

\begin{proof}[Proof of Theorem \ref{T:2}] 	Let   $\Theta_{G}(2)$ be  the number of elements in $V_*(FG)[2]$. We show  that
\begin{equation}\label{E:4}
\Theta_{Q_{2^{n+1}}}(2) < \Theta_{D^-_{2^{n+1}}}(2) < \Theta_{D_{2^{n+1}}}(2).
\end{equation}
First of all, we  prove the left inequality in \eqref{E:4}.

Let $x=x_1+x_2b\in V({F}D_{2^{n+1}}^-)$ in which  $x_1,x_2 \in {F}C_{}$. It is easy to check that  $x\in V_*({F}D_{2^{n+1}}^-)[2]$ if and only if
$(x_1+x_2b)^2=1$ and $x_1+x_2b=(x_1+x_2b)^*=x_1^*+\widetilde{x_2}b$,  if and only if
\begin{equation}\label{semidihedral}
\begin{cases}
x_1^2=x_2x_2^*+1;\\
x_1=x_1^*;\\
x_2=\widetilde{x_2}.
\end{cases}
\end{equation}
\noindent
Case 1. Let   $\chi(x_1)=1$ and  $\chi(x_2)=0$. If $x_2\in\{\; 0, \; \gamma(1+a)^i\; \mid\; i\geq {2^{n-1}}\}$ for some $\gamma\in V({F}C)$,
then the set of solutions of \eqref{quaternion} and \eqref{semidihedral} are the same, so $\Theta_{Q_{2^{n+1}}}(2) \leq \Theta_{SD^-_{2^{n+1}}}(2)$.

\noindent
Case 2. Let  $\chi(x_1)=0$ and  $\chi(x_2)=1$. There are no  units in  $V_*({F}Q_{2^{n+1}})[2]$  with such condition  by the case 1 in the proof of Lemma \ref{L:9}. However,  $b\in V_*({F}D^-_{2^{n+1}})[2]$ , so  the left inequality of  \eqref{E:4} holds.	

Finally,  let us prove the right inequality in \eqref{E:4}. If $x_2=\widetilde{x_2}$, then  systems of equations \eqref{dihedral} and \eqref{semidihedral} have the same  set of solutions.

Let   $x_1+x_2b \in V_*({F}D_{2^{n+1}})[2]$ such that    $x_2\not =\widetilde{x_2}$.  Note that there is no any unit  $x_1+x_2b \in V_*({F}D^-_{2^{n+1}})[2]$ by \eqref{semidihedral}. Set $s_1:=1+a^{2^{n-1}}$ and $s_2:=a\in FC_{}$.  It is easy to check that  $ s_1+s_2b \in V_*({F}D_{2^{n+1}})[2]$, such that $\chi(s_1)=0$,  $\chi(s_2)=1$ and $s_2\not=\widetilde{s_2}$. Consequently, $\Theta_{D^-_{2^{n+1}}}(2) < \Theta_{D_{2^{n+1}}}(2)$ and  the proof is complete.
\end{proof}


\begin{proof}[Proof of Theorem \ref{T:3}]
If  $G\in \{Q_8, D_8\}$, then  $V_*({F}G)\cong G\times C_2^3$  by \cite{Bovdi_Erdei_I, Creedon_Gildea_I, Creedon_Gildea_II}. Furthermore, \cite[Corollary 10]{Balogh_Creedon_Gildea} shows that $V_*({F}G)$ is Hamiltonian
if and only if $G$ is Hamiltonian, so this part is done.

\noindent	
Let $\frak{S}:=\{Q_8\times C_2, M_{16}, Q_{16}, C_4 \ltimes C_4, D^-_{16}, D_8 \Y C_4, D_{16}, G(4,4), D_8 \times C_2\}$ be the list of  non-abelian group of order $16$. A generator set of unitary subgroups $V_*({F}G)$  for  groups $G\in \frak{S}$ was given  in \cite{Bovdi_Erdei_II}. Using  this list,  we consider each group $G\in \frak{S}$ separately:

\noindent
$\bullet$ Let $G=Q_8 \times C_2$. The group $V_*({F}G)$ is Hamiltonian \cite[Corollary 10]{Balogh_Creedon_Gildea}, so  $V_*({F}G)\cong G \times C_2^7 \cong Q_8 \times C_2^8$ by \cite{Bovdi_Erdei_II}.

\noindent
$\bullet$ Let $G\in\{M_{16}, Q_{16}\}$, where  $M_{16}:=\gp{a,b\vert a^8=b^2=1,(a,b)=a^4}$.
Clearly,  $|V_*({F}G)|=2^{10}$  by \cite[Examples 3, 10]{Bovdi_Erdei_II}.

\noindent
According to \cite[Examples 3]{Bovdi_Erdei_II}, we have
\[
V_*({F}M_{16})\cong (G \ltimes C_2^3) \times C_2^3=(\gp{a,b}\ltimes \gp{c_1,c_2,c_3}) \times C_2^3,
\]
in which   $(c_1,a)=c_3$, $(c_2,a)=(c_3,a)=1$,  $(c_1,b)=c_2c_3$ and $(c_2,b)=(c_3,b)=1$.
It yields that   $V_*({F}M_{16})'\cong G'\times C_2 \times C_2 \cong C_2^3$.

\noindent
According to \cite[Examples 10]{Bovdi_Erdei_II}, we have
\[
V_*({F}Q_{16})\cong (G \ltimes C_2^4) \times C_2^2=(\gp{a,b}\ltimes \gp{c_1,\ldots, c_4})\times C_2^2,
\]
in which $(c_1,a)=(c_2,a)=c_1c_2$, $(c_3,a)=(c_4,a)=c_3c_4$,  $(c_1,b)=(c_2,b)=1$ and $(c_3,b)=(c_4,b)=c_3c_4$.
This yields that  $V_*({F}Q_{16})'\cong C_4 \times C_2^2$, so  $V_*({F}M_{16})$
is not isomorphic to $V_*({F}Q_{16})$.


\noindent
$\bullet$ Let $G\in \{C_4 \ltimes C_4, D^-_{16}, D_8 \Y C_4\}$.   Then  $|V_*({F}G)|=2^{11}$ (see \cite{Bovdi_Erdei_II, Gildea_I}) and using the generating set from \cite{Bovdi_Erdei_II} we prove that these unitary subgroups are pairwise  not isomorphic.


\noindent
Let  $G=C_4 \ltimes C_4$. According to \cite[Example 9]{Bovdi_Erdei_II},  we get that
\[
V_*({F}G)\cong (G \ltimes C_2^4) \times C_2^3=(\gp{a,b} \ltimes \gp{c_1,\ldots,c_4})\times C_2^3,
\]
in which
$(c_1,a)=(c_1,b)=(c_4,b)=1$, $(c_2,a)=(c_3,a)=c_1c_2c_3$, $(c_4,a)=c_1$, $(c_2,b)=(c_3,b)=c_2c_3$ and  $(c_4,a)=1$.
This yields that  $V_*({F}G)'\cong G'\times C_2^3 \cong C_2^4$.

\noindent
Let   $G=D^-_{16}$. According to \cite[Theorem 4]{Bovdi_Erdei_I} we have
\[
V_*({F}G)\cong G \ltimes C_2^7=\gp{a,b} \ltimes \gp{c_1,\ldots,c_7},
\]
in which $(c_1,a)=c_2c_4c_5c_7$, $(c_2,a)=(c_3,a)=1$, $(c_4,a)=(c_6,a)=c_4c_6$, $(c_5,a)=(c_7,a)=c_5c_7$,
$(c_1,b)=c_2c_3c_5c_6c_7$, $(c_2,b)=(c_3,b)=(c_4,b)=(c_6,b)=1$ and $(c_5,b)=(c_7,b)=c_5c_7$.
Moreover,  $V_*(FG)'\cong C_4 \times C_2^4$.

\noindent
Let  $G=D_8 \Y C_4$.  Then $V_*({F}G)\cong G \times C_2^7$ and  $V_*({F}G)'\cong C_2$ by  \cite[Example 5]{Bovdi_Erdei_II}.

We have proved that for each pair of the groups $\{C_4 \ltimes C_4, D^-_{16}, D_8 \Y C_4\}$ the corresponding $*$-unitary subgroups are pairwise not  isomorphic.

\noindent
$\bullet$  Let $G\in\{D_{16}, G(4,4)\} $, where $G(4,4) =\gp{a,b,c\;|\; a^4=b^2=c^2=1,(a,b)=1,(a,c)=b,(b,c)=1}$. Then  $|V_*({F}G)|=2^{12}$ by  \cite{Bovdi_Erdei_II, Bovdi_Erdei_I, Gildea_I}. Using the generating set from \cite{Bovdi_Erdei_I} and \cite[Example 6]{Bovdi_Erdei_II},  we prove that these unitary subgroups are pairwise not isomorphic.


\noindent
Let $G=G(4,4)$. According to \cite[Example 7]{Bovdi_Erdei_II}, we have
\[
V_*({F}G)\cong (G \ltimes C_2^6) \times C_2^2=\gp{a,b,c}\ltimes \gp{d_1,\ldots,d_6} \times C_2^2
\]
in which
$(d_1,a)=(d_2,a)=1$, $(d_3,a)=(d_4,a)=d_3d_4$, $(d_5,a)=(d_6,a)=d_5d_6$,
$(d_1,c)=(d_2,c)=d_1d_2$, $(d_3,c)=(d_4,c)=1$ and  $(d_5,c)=(d_6,c)=d_5d_6$.
Since $b$ is a central element,  $V_*({F}G)'\cong G'\times C_2^3 \cong C_2^4$.

\noindent
Let  $G=D_{16}$.  According to  \cite[Theorem 2]{Bovdi_Erdei_I}, we have
\[
V_*({F}G)\cong G \ltimes (D_8 \times C_2^5)=G \ltimes (\gp{c_1,c_2}\times \gp{c_3,\ldots,c_7} ),
\]
in which  $(c_1,a)=c_6c_7$, $(c_2,a)=(c_1,b)=(c_2,b)=c_3c_5c_6c_7$, $(c_3,a)=1$, $(c_4,a)=(c_6,a)=c_4c_6$, $(c_5,a)=(c_7,a)=(c_5,b)=(c_7,b)=c_5c_7$ and $(c_3,b)=(c_4,b)=(c_6,b)=1$. We have proved that $V_*({F}G)'\cong G'\times D_8'\times C_2^4 \cong C_4 \times C_2^5$.
Therefore the corresponding unitary subgroups are not isomorphic groups.


\noindent
$\bullet$ Finally, let   $G=D_8 \times C_2$.  According to   \cite[Example 7]{Bovdi_Erdei_II}, we have  $V_*({F}G)\cong (G \ltimes C_2^6) \times C_2^3$, so  $|V_*({F}G)|=2^{13}$ which completes the proof.  \end{proof}

Note that Theorem \ref{T:3} was  verified  by   GAP   package  RAMEGA  \cite{ramega}. Also,
the use of the computational algebra system  GAP \cite{GAP4} itself was very successful when we  started our investigation.

\noindent
{\bf Acknowledgment.} We would like to express our deep gratitude to both of  referees  for
the thoughtful and constructive review of our manuscript.

\end{document}